\documentclass[11pt]{article}

\usepackage{amsmath, amssymb, amsthm, amsfonts, mathtools}
\usepackage{xcolor}
\usepackage{hyperref}
\usepackage{cite}

\definecolor{webgreen}{rgb}{0,.5,0}
\definecolor{webbrown}{rgb}{.6,0,0}

\setlength{\textwidth}{6.5in}
\setlength{\oddsidemargin}{.1in}
\setlength{\evensidemargin}{.1in}
\setlength{\topmargin}{-.1in}
\setlength{\textheight}{8.4in}

\newtheorem{theorem}{Theorem}
\newtheorem*{Notation}{Notation}
\newtheorem{corollary}[theorem]{Corollary}
\newtheorem{lemma}[theorem]{Lemma}
\newtheorem{proposition}[theorem]{Proposition}
\newtheorem{remark}[theorem]{Remark}

\DeclareMathOperator{\NR}{NR}
\newcommand{\Mod}[1]{\ (\mathrm{mod}\ #1)}

\title{\LARGE \bf A Unified Approach to Calculating Sylvester Sums}

\author{
\begin{tabular}{c}
Neha Gupta\thanks{\href{mailto:neha.gupta@snu.edu.in}{neha.gupta@snu.edu.in}} \\
Manoj Upreti\thanks{\href{mailto:mu506@snu.edu.in}{mu506@snu.edu.in}} \\
\small Department of Mathematics, Shiv Nadar Institution of Eminence \\
\small Greater Noida–201314, India
\end{tabular}
}

\date{}

\begin{document}

\maketitle

\vskip .2 in
\begin{abstract}
 In the context of the Frobenius coin problem, given two relatively prime positive integers $a$ and $b$, the set of nonrepresentable numbers consists of positive integers that cannot be expressed as nonnegative integer combination of $a$ and $b$. This work provides a formula for calculating the power sums of all nonrepresentable numbers, also known as the Sylvester sums. Although alternative formulas exist in the literature, our approach is based on an elementary observation. We consider the set of natural numbers from $1$ to $ab - 1$ and compute their total sum in two distinct ways, which leads naturally to the desired Sylvester sums. This method connects an analytic identity with a combinatorial viewpoint, giving a new way to understand these classical quantities.
 Furthermore, in this paper, we establish a criterion using the division algorithm to determine whether a given positive integer is nonrepresentable.
.

\end{abstract}

\section {Introduction}
Given relatively prime positive integers $a$ and $b$, a nonnegative integer $n$ is a representable number if it can be expressed as $ax+by$ for nonnegative integers $x$ and $y$. Conversely, if $n$ cannot be expressed in this form, it is called a nonrepresentable number. The set of nonrepresentable numbers, denoted by $\NR(a,b)$, is defined as:  
$$\NR(a,b)=\{n\in \mathbb{N}:n\neq ax+by; \text{ for any } x, y\in \mathbb{Z}_{\geq 0} \}.$$
Similarly, for $n$ variables $a_1,a_2,\dots,a_n$ with $\gcd(a_1,a_2,\dots,a_n)=1$, the set $\NR(a_1,a_2,\dots,a_n)$ is defined as
$$\NR(a_1,a_2,\dots,a_n)=\{n\in \mathbb{N}:n\neq a_1x_1+a_2x_2+\cdots+a_nx_n; \text{ for any } x_1,x_2,\dots,x_n\in \mathbb{Z}_{\geq 0} \}.$$

The well-known Frobenius coin problem involves finding the largest element in the set $\NR(a_1,a_2,\dots,a_n)$, known as the Frobenius number and denoted as $g(a_1,a_2,\dots,a_n)$. The cardinality of the set $\NR(a_1,a_2,\dots,a_n)$, is known as the Sylvester number and is denoted as $n(a_1,a_2,\dots,a_n)$.  In this paper, we restrict ourselves to the case $n=2$. The problem traces back to Sylvester, who is credited with the following elegant result (refer to \cite[Note~2, p.~17]{BR07})
$$g(a,b)=ab-a-b,$$
by establishing what is sometimes referred to as Sylvester's Lemma:
\begin{lemma}[Sylvester (1882)]
\label{SY1}
    For relatively prime positive integers $a$ and $b$, the equation $ax+by=n$ has a solution with $x,y\in \mathbb{Z}_{\geq 0}$, whenever $n\geq (a-1)(b-1)$. 
\end{lemma}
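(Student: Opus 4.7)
The plan is to exhibit a nonnegative solution directly, using Bezout/congruence arguments modulo $b$ together with the division algorithm. The payoff will be a tight inequality that forces nonnegativity.

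First I would observe that since $\gcd(a,b)=1$, the multiples $0, a, 2a, \dots, (b-1)a$ form a complete residue system modulo $b$. Hence there is a unique $x \in \{0, 1, \dots, b-1\}$ with $ax \equiv n \pmod{b}$. Setting $y = (n - ax)/b$, this $y$ is automatically an integer, and the pair $(x,y)$ trivially satisfies $ax + by = n$. The only thing left to verify is $y \geq 0$, or equivalently $n \geq ax$.

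Next, I would plug in the hypothesis $n \geq (a-1)(b-1) = ab - a - b + 1$ and the upper bound $x \leq b-1$ to obtain
\[
n - ax \;\geq\; (ab - a - b + 1) - a(b-1) \;=\; -(b-1).
\]
Here is the key step: $n - ax$ is by construction an integer multiple of $b$, and the only multiple of $b$ lying in the interval $[-(b-1), 0)$ would have to be strictly negative with absolute value less than $b$, which is impossible. Hence $n - ax \geq 0$, giving $y \geq 0$ as desired.

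I do not anticipate a serious obstacle; the main subtlety is simply that the naive estimate $ax \leq a(b-1) = ab - a$ by itself is weaker than the bound $(a-1)(b-1)$ we are trying to match, so one must exploit the divisibility of $n - ax$ by $b$ to convert a negative-looking slack of $-(b-1)$ into the genuine conclusion $y \geq 0$. This elementary residue-system argument is what makes Sylvester's Lemma work, and it will also be the natural bookkeeping device throughout the paper when we later partition $\{1, 2, \dots, ab-1\}$ according to residues modulo one of $a$ or $b$.
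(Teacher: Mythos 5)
Your argument is correct: choosing the unique $x\in\{0,\dots,b-1\}$ with $ax\equiv n\pmod{b}$, the quantity $n-ax$ is a multiple of $b$ that your estimate places at or above $-(b-1)>-b$, and the only such multiples are nonnegative, so $y=(n-ax)/b\ge 0$. Note, however, that the paper does not prove this lemma at all; it is quoted as Sylvester's classical 1882 result with a citation to the literature (Beck--Robins), and it is then used as an ingredient in the proof of Theorem \ref{main}. So there is nothing in the paper to compare against step by step; your residue-system proof is the standard self-contained argument and would serve as a valid substitute for the citation. One small remark: the final sentence of your proposal suggests this residue bookkeeping recurs when partitioning $\{1,\dots,ab-1\}$, but the paper's actual mechanism for that partition is Binner's counting formula (Corollary \ref{D3}) rather than a direct residue argument, so that anticipated continuity is not quite how the paper proceeds.
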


Sylvester also proved the following result:
$$n(a,b)=\frac{1}{2}(a-1)(b-1),$$
by establishing Theorem \ref{SY2} below\cite{JJS2}: 
\begin{theorem}[Sylvester (1882)]
\label{SY2}
      For relatively prime positive integers $a$ and $b$, the number of positive integers that cannot be expressed in the form $ax+by$ with $x,y\in \mathbb{Z}_{\geq 0}$ is equal to $\frac{1}{2}(a-1)(b-1).$ 
\end{theorem}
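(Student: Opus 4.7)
The plan is to count $\NR(a,b)$ via a symmetry on the interval $S = \{0, 1, \ldots, g\}$, where $g = ab - a - b$ is the Frobenius number from Lemma~\ref{SY1}. Since that lemma places every nonrepresentable positive integer inside $S$, and $0$ itself is representable, counting nonrepresentables in $S$ coincides with counting all of $\NR(a,b)$. I would establish the classical involution $n \leftrightarrow g - n$ on $S$ and prove that in each pair exactly one element is representable; this immediately yields $|\NR(a,b)| = |S|/2 = (g+1)/2 = (a-1)(b-1)/2$.

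The engine of the argument is a unique-representation lemma from the division algorithm (the criterion hinted at in the abstract). Because $\gcd(a, b) = 1$, every integer $n$ admits a unique expression
\[n = ax + by, \qquad x \in \{0, 1, \ldots, b-1\},\ y \in \mathbb{Z},\]
with $x$ determined by $x \equiv a^{-1} n \pmod{b}$ and then $y = (n - ax)/b$. In this canonical form, $n$ is representable (in the Frobenius sense) if and only if $y \geq 0$: the reverse implication is immediate, and the forward implication follows by taking any nonnegative expression $n = ax'' + by''$, writing $x'' = qb + x$ with $q \geq 0$, and observing that by uniqueness $y = qa + y'' \geq 0$.

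A direct computation
\[g - n = (ab - a - b) - (ax + by) = a(b - 1 - x) + b(-1 - y)\]
then shows that the canonical form of $g - n$ is $(x', y') = (b - 1 - x,\ -1 - y)$, since $b - 1 - x \in \{0, 1, \ldots, b-1\}$. Hence $g - n$ is representable iff $-1 - y \geq 0$ iff $y \leq -1$ iff $n$ is nonrepresentable, which is the required symmetry. The map $n \mapsto g - n$ has no fixed point on $S$, because a fixed point would satisfy $2n = g$ and be simultaneously representable and nonrepresentable; equivalently, $\gcd(a, b) = 1$ forces one of $a - 1$ and $b - 1$ to be even, so $g$ is odd. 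Thus $S$ partitions into $(g+1)/2 = (a-1)(b-1)/2$ two-element orbits, each contributing exactly one nonrepresentable integer, which completes the count. The only place requiring care is the uniqueness and characterization in the canonical form, together with the bookkeeping after translation by $g$; both are elementary, so I do not anticipate a serious technical obstacle here.
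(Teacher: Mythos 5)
Your proof is correct, but it is worth noting that the paper itself offers no proof of this statement: Theorem~\ref{SY2} is quoted as Sylvester's classical result and then used as an ingredient (it is invoked, together with Lemma~\ref{SY1}, in the proof of Theorem~\ref{main}, and the later derivation of $S_0=\tfrac12(a-1)(b-1)$ in Corollary~\ref{I2} therefore does not constitute an independent proof). What you give is the standard symmetry argument: the canonical form $n=ax+by$ with $0\le x\le b-1$, $y\in\mathbb{Z}$ is indeed unique since $a$ is invertible mod $b$, your characterization ``representable iff $y\ge 0$'' is justified correctly (reducing $x''$ mod $b$ and absorbing the quotient into $y$), and the computation $g-n=a(b-1-x)+b(-1-y)$ shows exactly one of $n$, $g-n$ is representable; since $g=ab-a-b$ is odd when $\gcd(a,b)=1$, the involution $n\mapsto g-n$ on $\{0,\dots,g\}$ is fixed-point free, giving $(g+1)/2=\tfrac12(a-1)(b-1)$ nonrepresentable integers. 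This is self-contained modulo Lemma~\ref{SY1} (and in fact your canonical-form criterion already implies that every $n>g$ is representable, since then $g-n<0$ forces $y\ge 0$, so you could dispense with Lemma~\ref{SY1} entirely); by contrast, the paper's machinery reaches the same count only through Corollary~\ref{D3} and Bernoulli-number bookkeeping. Your route is the more elementary and logically prior one; the paper's route buys the higher power sums $S_m$, which your involution alone does not give.
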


Computing the Frobenius number becomes considerably more challenging when extended beyond two variables and continues to be an active area of research. In 2017, Tripathi \cite{ATF} proposed a formula for the three-variable case; while insightful, its complexity limits its practical applicability in more general settings. However, explicit results are available under certain special conditions, for example, when numbers form an arithmetic sequence \cite{R56, Tri13} or a geometric sequence \cite{OP08, Tri08}. Additional contributions can be found in~\cite{KLP23, Tri16, Bat15}. 
For some recent developments, see~\cite{KoUG, KoC, KGG}, and for a comprehensive background and survey of the topic, refer to~\cite{RA07}.

 The study of power sums of elements of $\NR(a,b)$, often referred to as Sylvester sums, is another crucial aspect of this area of research \cite{BS,JR,MQ,AT,HJT}. These sums are denoted by $S_{m}(a,b)$, where $m$ is a nonnegative integer, and defined as:
$$S_m(a,b)=\sum_{n\in \NR(a,b)}n^m.$$
For $m=0$, it follows from Theorem \ref{SY2} that
\begin{equation}
\label{snm}
S_0(a,b)=\frac{1}{2}(a-1)(b-1).
\end{equation}

Brown and Shiue \cite{BS} calculated $S_1(a,b)$ by deriving a generating function of the characteristic function of the set $\NR(a,b)$. Later, Tripathi \cite{AT} provided a more direct approach to computing $S_1(a,b)$, first generating an explicit list of all nonrepresentable numbers using modular arithmetic and then summing them. He extended this method to compute $S_1(a_1,a_2,\dots,a_k)$. R{\o}dseth \cite{JR} obtained a closed form for $S_m(a,b)$ valid for every nonnegative integer $m$, using an exponential generating function.

Tuenter \cite{HJT} calculated $S_m(a,b)$ by deriving an identity that completely characterizes the set $\NR(a,b)$. In recent work, Komatsu proposed a generalized version of Sylvester sums, referred to as weighted Sylvester sums \cite{KZTV,KZMV}. Additionally, he derived explicit results under certain special conditions \cite{KomAP,KomAP1}. For a fixed positive integer $m$, the interested reader may refer to Beck and Bardomero \cite{BBeck}, where the authors present a formula for the $m$th power sum of positive integers that can be represented in exactly $k$ ways as nonnegative integer combinations of $a$ and $b$, for any $k \geq 1$.

In this paper, we introduce a recursive formula for the Sylvester sums $S_m(a,b)$ for any nonnegative integer $m$. Our technique differs from the analytic approaches involving Bernoulli numbers found in the work of R{\o}dseth \cite{JR} and Tuenter \cite{HJT}, offering instead a direct and elementary combinatorial derivation.

The key result~\cite[Corollary~17]{D19} underlying our work was originally studied by Barlow~\cite[pp.~323--325]{B18}, and appears to be due to Popoviciu~\cite{P53}; see ~\cite[Chapter~1, Theorem~1.5]{BR07}. An equivalent version of this result was later derived using generating functions by Tripathi~\cite{ATax}. Our work is motivated by a recent proof of this identity given by Binner~\cite[Corollary~17]{D19}, who obtained it as a corollary to his main theorem, which we recall here as Corollary~\ref{D3}.

\begin{corollary}[Binner 2020]
\label{D3}
Let $a$ and $b$ be relatively prime positive integers and $n$ be some positive integer. Then, the number of nonnegative integer solutions, denoted by  $N(a,b;n)$, of the equation $ax+by=n$, is given by
\begin{equation*}
    N(a,b;n)=1+\frac{n-aa_1-bb_1}{ab},
\end{equation*}
where the symbols $a_1$ and $b_1$ denote the remainder when $na^{-1}$ is divided by $b$ and $nb^{-1}$ divided by $a$ respectively and $a^{-1}$ and $b^{-1}$ denote the modular inverse of $a$ with respect to $b$ and $b$ with respect to $a$.
\end{corollary}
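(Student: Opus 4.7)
The plan is to parametrize all nonnegative integer solutions of $ax+by=n$ by a single integer coming from the modular constraint on $x$, and then count how many values of this parameter also satisfy $y \geq 0$, using the analogous modular constraint on $y$ to convert a floor function into the closed-form expression claimed.

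Concretely, I would first reduce $ax+by=n$ modulo $b$: this forces $ax \equiv n \pmod{b}$, hence $x \equiv na^{-1} \equiv a_1 \pmod{b}$. Since $0 \leq a_1 \leq b-1$ and $x \geq 0$, every candidate nonnegative solution has the form $x = a_1 + kb$ for a uniquely determined $k \in \mathbb{Z}_{\geq 0}$. Substituting back gives $y = m - ak$ where $m := (n - aa_1)/b$ is an integer (because $aa_1 \equiv n \pmod{b}$). The nonnegativity constraint $y \geq 0$ becomes $k \leq m/a$, so counting solutions reduces to counting nonnegative integers $k$ not exceeding $m/a$.

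The key step is to pin down $m \bmod a$. Multiplying $m = (n-aa_1)/b$ by $b$ gives $bm \equiv n \pmod{a}$, hence $m \equiv nb^{-1} \equiv b_1 \pmod{a}$. Writing $m = b_1 + a\ell$, I obtain $m/a = \ell + b_1/a$ with $0 \leq b_1/a < 1$, so the constraint $k \leq m/a$ on the integer $k$ simplifies to $k \leq \ell$. When $\ell \geq 0$ this yields exactly $\ell + 1 = 1 + (n - aa_1 - bb_1)/(ab)$ solutions, which is precisely the claimed formula.

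The one subtle point, which I expect to be the main obstacle, is the non-representable regime: a priori the right-hand side $1 + (n - aa_1 - bb_1)/(ab)$ could be negative, clashing with $N(a,b;n) \geq 0$. To rule this out I would use the bounds $aa_1 \leq a(b-1)$ and $bb_1 \leq b(a-1)$ together with $n \geq 0$ to conclude $n - aa_1 - bb_1 > -2ab$, so $\ell \geq -1$. The borderline case $\ell = -1$ corresponds exactly to $n$ being non-representable, where the set of valid $k$ is empty and the formula correctly returns $1+(-1) = 0 = N(a,b;n)$. This shows the identity holds uniformly for every positive integer $n$.
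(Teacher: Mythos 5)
Your argument is correct and complete. Note, however, that the paper itself gives no proof of Corollary~\ref{D3}: it is imported verbatim from Binner (2020), where it arises as a corollary of his main theorem, so there is no internal proof to compare against. What you have written is essentially the classical direct derivation of Popoviciu's counting formula: reducing modulo $b$ forces $x = a_1 + kb$ with $k \in \mathbb{Z}_{\geq 0}$; then $y = m - ak$ with $m = (n-aa_1)/b \in \mathbb{Z}$, so the count is the number of integers $k$ with $0 \le k \le m/a$; the congruence $m \equiv b_1 \pmod{a}$ turns this into $0 \le k \le \ell$ with $\ell = (n - aa_1 - bb_1)/(ab) \in \mathbb{Z}$, giving $\ell + 1$ solutions when $\ell \ge 0$; and the bounds $a_1 \le b-1$, $b_1 \le a-1$ show $\ell > -2$, hence $\ell \ge -1$, so in the nonrepresentable case $\ell = -1$ and the formula correctly returns $0$. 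Each of these steps checks out, including the integrality of $m$ and $\ell$ and the handling of the borderline case, and the argument transparently covers $n = 0$ as well, which the paper only notes in a remark. Your route is more elementary and self-contained than citing Binner's main theorem, at the modest cost of redoing a known argument rather than reusing the cited result.
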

\begin{remark}
    Corollary \ref{D3} remains valid for case $n=0$.
\end{remark}

For any nonnegative integer $n$ such that $n<ab$, Corollary \ref{D3} implies that $N(a,b;n)$ is either $0$ or $1$. It implies that $n$ is either a nonrepresentable number or has a unique representation. More precisely, when $N(a,b;n)=0$, Corollary \ref{D3} ensures that there exist $0\leq a_1 \leq b-1$ and $0\leq b_1 \leq a-1$ such that 
\begin{equation*}
    n=aa_1+bb_1-ab.
\end{equation*}
On the other hand, when $N(a,b;n)=1$, Corollary \ref{D3} ensures that there exist $0\leq a_1 \leq b-1$ and $0\leq b_1 \leq a-1$ such that 
\begin{equation*}
    n=aa_1+bb_1.
\end{equation*}
Thus for any nonnegative integer $n$ such that $0\leq n<ab$, the integer $n$ can be expressed as 
\begin{equation*}
    n=aa_1+bb_1-ab\chi_{a_1,b_1}
\end{equation*}
for some $a_1$ and $b_1$ such that $0\leq a_1 \leq b-1$ and $0\leq b_1 \leq a-1$, where
\begin{displaymath}
    \chi_{a_1,b_1} = \begin{cases}
    1, & \text{if } N(a,b;n) = 0;\\
    0, & \text{if } N(a,b;n) = 1.
    \end{cases}
\end{displaymath}
Since the set of nonrepresentable numbers is contained in the set $\{n\in \mathbb{Z}: 0\leq n <ab  \}$, the cardinality of the set $\NR(a,b)$ can therefore be expressed in terms of $\chi_{a_1,b_1}$ as
\begin{equation}
\label{sn}
    |\NR(a,b)|= \sum_{\substack{ 0\leq a_1\leq b-1\\ 0\leq b_1\leq a-1}}\chi_{a_1,b_1}.
\end{equation}

To formalize these ideas, we state them as Theorem \ref{main}, which we prove in Section \ref{sec22} using Lemma \ref{SY1}, Theorem \ref{SY2} and Corollary \ref{D3}. Theorem \ref{main} also explicitly defines an equivalent form of $\chi_{a_1,b_1}$ explicitly in terms of $a_1$ and $b_1$, where $0\leq a_1\leq b-1$ and $0\leq b_1\leq a-1$.  
\begin{theorem}
\label{main}
   Let $a$ and $b$ be relatively prime positive integers, and let $a_1$ and $b_1$ be as defined in Corollary \ref{D3}. For nonnegative integer $n$, the set $\{n:0\leq n\leq ab-1 \}$ is the same as the set $\{aa_1+bb_1-ab\chi_{a_1,b_1}: 0\leq a_1 \leq b-1, 0\leq b_1 \leq a-1  \}$ where 
   \begin{displaymath}
    \chi_{a_1,b_1} = 
    \begin{cases} 
        1, & \quad \text{if } aa_1 + bb_1 - ab > 0; \\  
        0, & \quad \text{if } aa_1 + bb_1 - ab < 0.  
    \end{cases}
\end{displaymath}
\end{theorem}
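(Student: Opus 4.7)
The plan is to apply Corollary~\ref{D3} pointwise to each integer $n$ with $0 \le n \le ab-1$, and then match the two possible values of $N(a,b;n)$ with the two cases in the definition of $\chi_{a_1,b_1}$. Because $0 \le aa_1 \le a(b-1)$ and $0 \le bb_1 \le b(a-1)$, one obtains $-ab < n - aa_1 - bb_1 < ab$, so the formula $N(a,b;n) = 1 + (n - aa_1 - bb_1)/ab$ is forced to take the value $0$ or $1$.

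Next I would dispose of the two cases in turn. If $N(a,b;n) = 1$, then $n = aa_1 + bb_1$; since $n \le ab-1 < ab$, this forces $aa_1 + bb_1 - ab < 0$, so $\chi_{a_1,b_1} = 0$ and $n = aa_1 + bb_1 - ab\chi_{a_1,b_1}$. If $N(a,b;n) = 0$, then $n = aa_1 + bb_1 - ab \ge 0$, forcing $aa_1 + bb_1 \ge ab$; the equality $aa_1 + bb_1 = ab$ is ruled out by coprimality together with the bounds on $a_1,b_1$ (it would require $a \mid bb_1$, hence $b_1 = 0$, and then $a_1 = b$, contradicting $a_1 \le b-1$), so $aa_1 + bb_1 - ab > 0$ and $\chi_{a_1,b_1} = 1$, again giving $n = aa_1 + bb_1 - ab\chi_{a_1,b_1}$. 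This realizes every $n \in \{0,\dots,ab-1\}$ as an element of the set on the right.

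The reverse inclusion follows from the same bound check split on $\chi_{a_1,b_1} \in \{0,1\}$: when $\chi_{a_1,b_1}=0$ we have $0 \le aa_1+bb_1 \le ab-1$ by the definition, and when $\chi_{a_1,b_1}=1$ we have $0 < aa_1+bb_1-ab \le ab - a - b$. A short counting argument then promotes inclusion to equality: the index set contains $ab$ pairs, and distinct pairs $(a_1,b_1)$ in the prescribed ranges yield distinct values modulo $ab$ by CRT (the residue $aa_1 \bmod b$ determines $a_1$ and the residue $bb_1 \bmod a$ determines $b_1$), so two sets of size $ab$ must coincide. The only delicate point in the entire argument is the boundary case $aa_1+bb_1=ab$, which is not covered by either branch of $\chi_{a_1,b_1}$; handling it needs the coprimality argument above, but I do not anticipate any deeper obstacle, since the theorem is essentially a direct translation of Corollary~\ref{D3} into explicit set language.
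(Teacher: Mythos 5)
Your proof is correct and takes essentially the same route as the paper: apply Corollary~\ref{D3} to each $n$ with $0\le n\le ab-1$, deduce $N(a,b;n)\in\{0,1\}$, and match the two cases with the two branches of $\chi_{a_1,b_1}$ (your coprimality argument ruling out $aa_1+bb_1=ab$ plays the role of the paper's observation that $n\neq 0$ when $N(a,b;n)=0$, and your explicit reverse inclusion plus CRT injectivity replaces the paper's brief cardinality remark). One cosmetic slip: the strict inequality $-ab<n-aa_1-bb_1$ does not follow from the ranges of $a_1,b_1$ and in fact fails (with equality $n-aa_1-bb_1=-ab$) exactly when $N(a,b;n)=0$; but it is not needed, since $N(a,b;n)\ge 0$ is automatic and the upper bound $n<ab$ alone gives $N(a,b;n)<2$, so your conclusion stands.
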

Applying Theorem \ref{main}, we have
\begin{equation}
\label{ET}
    \sum_{n=0}^{ab-1} n^m= \sum_{\substack{0\leq a_1 \leq b-1 \\ 0\leq b_1 \leq a-1}} (aa_1+bb_1-ab\chi_{a_1,b_1})^m.
\end{equation}

For a given positive integer $m$, our aim is to prove our main result, Theorem \ref{mt}, which provides a formula to calculate the Sylvester sum $S_{m-1}$. We expand the right-hand side of Equation \eqref{ET}. To simplify this expression and isolate the desired Sylvester sum, we utilize a combinatorial identity presented in Proposition \ref{ml2}, which we prove in Section \ref{prop}. On the left-hand side, we express the power sum of the first $n$ natural numbers using Bernoulli's formula \cite[Eq.\ 1.1, p.\ 1]{BBer}, as follows:
\begin{theorem}[Bernoulli 1713]
\label{bernoulli}
For natural numbers $n$ and $k$
    \begin{equation*}
    \sum_{i=1}^{n}i^k=\frac{1}{k+1}\sum_{j=0}^{k}\binom{k+1}{j}B_j
n^{k+1-j},     
\end{equation*}
where $\binom{k}{j}$ is the binomial coefficient 
$$\binom{k}{j}=\frac{k(k-1)(k-2)\cdots (k-j+1)}{j!},$$
and $B_j$ is the number, known as the Bernoulli number, determined by the recursive formula
\begin{equation}
\label{BNRF}
    \sum_{j=0}^{k} \binom{k+1}{j} B_j = k+1, \quad \text{for } k = 0, 1, 2, \dots.
\end{equation}
\end{theorem}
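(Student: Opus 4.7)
My plan is to derive Bernoulli's formula using exponential generating functions. The crux of the argument rests on the equivalence between the recursion \eqref{BNRF} and the closed-form identity
$$B(x) := \sum_{j\geq 0} B_j \frac{x^j}{j!} = \frac{xe^x}{e^x-1}.$$
I would establish this equivalence first by multiplying $B(x)$ by $(e^x-1)/x = \sum_{m\geq 0} x^m/(m+1)!$ and comparing the resulting Cauchy product with $e^x$: the coefficient of $x^k/k!$ in the product is $\frac{1}{k+1}\sum_{j=0}^{k}\binom{k+1}{j}B_j$, so matching against $e^x$ reduces exactly to \eqref{BNRF}. Since the $k=0$ case of \eqref{BNRF} forces $B_0 = 1$, the recursion uniquely determines every $B_j$, and the identity above holds as formal power series.

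With the generating function in hand, I would next evaluate the finite geometric sum
$$\sum_{i=1}^{n} e^{ix} = \frac{e^{(n+1)x}-e^x}{e^x-1} = \frac{xe^x}{e^x-1}\cdot\frac{e^{nx}-1}{x} = B(x)\cdot\sum_{m\geq 1}\frac{n^m}{m!}x^{m-1}.$$
The left-hand side also expands as $\sum_{k\geq 0}\bigl(\sum_{i=1}^{n} i^k\bigr)x^k/k!$. Computing the $x^k$-coefficient on the right via the Cauchy product yields $\frac{1}{(k+1)!}\sum_{j=0}^{k}\binom{k+1}{j}B_j\, n^{k+1-j}$; multiplying through by $k!$ and matching the two sides gives Bernoulli's formula verbatim.

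The hard part will be the first step: rigorously identifying the Bernoulli numbers from the recursion \eqref{BNRF} with the Taylor coefficients of $xe^x/(e^x-1)$. Once that is done, the rest is a routine Cauchy-product calculation. As a purely elementary alternative, one could use induction on $n$: set $F_k(n):=\frac{1}{k+1}\sum_{j=0}^{k}\binom{k+1}{j}B_j n^{k+1-j}$, apply \eqref{BNRF} to check $F_k(1)=1$, and then verify $F_k(n+1)-F_k(n)=(n+1)^k$ via a binomial expansion and re-indexing of a double sum. This second route avoids generating functions at the cost of a more intricate algebraic manipulation.
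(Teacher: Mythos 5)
Your proof is correct, but note that the paper itself offers no proof of this statement: Theorem \ref{bernoulli} is quoted as a classical result with a citation, so there is no argument of the authors' to compare against, and your write-up actually supplies a proof the paper leaves to the literature. Your two key steps both check out. First, multiplying $B(x)=\sum_{j\ge 0}B_jx^j/j!$ by $(e^x-1)/x=\sum_{m\ge 0}x^m/(m+1)!$ and equating with $e^x$ gives, in the coefficient of $x^k/k!$, exactly $\frac{1}{k+1}\sum_{j=0}^{k}\binom{k+1}{j}B_j=1$, i.e.\ the recursion \eqref{BNRF}; since the coefficient of $B_k$ there is $k+1\neq 0$, the $B_j$ are uniquely determined and $B(x)=xe^x/(e^x-1)$ as formal power series. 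This is the one place where a convention error could creep in: the paper's recursion yields $B_1=+\tfrac12$, which corresponds to $xe^x/(e^x-1)$ rather than the more common $x/(e^x-1)$, and you chose the right one. Second, the identity $\sum_{i=1}^n e^{ix}=B(x)\cdot\frac{e^{nx}-1}{x}$ and the Cauchy-product extraction of the $x^k$ coefficient, $\frac{1}{(k+1)!}\sum_{j=0}^k\binom{k+1}{j}B_j n^{k+1-j}$, do reproduce Bernoulli's formula after multiplying by $k!$; all manipulations are legitimate formal power series identities because $(e^x-1)/x$ is invertible and $e^{nx}-1$ has zero constant term. Your elementary fallback also works: $F_k(1)=1$ is immediate from \eqref{BNRF}, and the telescoping step $F_k(n+1)-F_k(n)=(n+1)^k$ follows from the identity $\binom{k+1}{j}\binom{k+1-j}{l}=\binom{k+1}{l}\binom{k+1-l}{j}$ together with \eqref{BNRF} applied with $k$ replaced by $k-l$.
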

   \begin{remark}
    The first three Bernoulli numbers, calculated using Equation \eqref{BNRF}, are:
    \begin{equation}
        \label{IBN}
        B_0=1, \quad B_1=\frac{1}{2}, \quad B_{2}=\frac{1}{6}.
    \end{equation}
\end{remark}

Next, we present two corollaries of Theorem \ref{main}, namely Corollary \ref{OC} and Corollary \ref{OT}, which we prove in Section \ref{sec22}. Corollary \ref{OC} is used to establish Corollary \ref{OT}, and Corollary \ref{OT}, in turn, is used to prove Proposition \ref{ml1'}. 

We begin by defining the following notation: 
\begin{Notation}
    For a positive integer $m$, let $\NR^m(a,b)$ denote the set consisting of $m^{th}$ powers of all nonrepresentable numbers. That is
    \begin{equation}
        \label{NRm}
        \NR^m(a,b):=\{n^m:n\in \NR(a,b) \}.
    \end{equation}
\end{Notation}

\begin{corollary}
\label{OC}
For positive integer $m$ and relatively prime positive integers $a$ and $b$,
\begin{equation*}
    \NR^m(a,b)=\{(aa_1+bb_1-ab)^m\chi_{a_1,b_1}: 0\leq a_1\leq b-1, 0\leq b_1\leq a-1 \}\setminus \{0\}.  
  \end{equation*}
\end{corollary}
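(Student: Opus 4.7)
The proof is essentially a bookkeeping corollary of Theorem~\ref{main}, so my plan is to read off the correspondence between the indexing pairs $(a_1,b_1)$ and the nonrepresentable numbers, then verify that raising to the $m$-th power and deleting $\{0\}$ behaves as claimed.

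First, I would invoke Theorem~\ref{main} to write every $n$ with $0 \leq n \leq ab-1$ in the form $n = aa_1+bb_1-ab\chi_{a_1,b_1}$ for a unique pair $(a_1,b_1)$ with $0 \leq a_1 \leq b-1$, $0 \leq b_1 \leq a-1$. Combining this with Corollary~\ref{D3} (interpreted as in the paragraph following it), I would observe that the case $\chi_{a_1,b_1}=1$ corresponds exactly to $N(a,b;n)=0$, i.e.\ to $n$ being nonrepresentable, while $\chi_{a_1,b_1}=0$ corresponds to $n = aa_1+bb_1$ being representable. Since $\NR(a,b) \subseteq \{0,1,\ldots,ab-1\}$ (because $g(a,b)=ab-a-b<ab$), this shows that
\[
\NR(a,b) = \{aa_1+bb_1-ab : 0\leq a_1\leq b-1,\ 0\leq b_1\leq a-1,\ \chi_{a_1,b_1}=1\}.
\]

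Next I would take $m$-th powers. For indices with $\chi_{a_1,b_1}=1$ the expression $(aa_1+bb_1-ab)^m\chi_{a_1,b_1}$ reduces to $(aa_1+bb_1-ab)^m$, a strictly positive integer since nonrepresentable numbers are at least $1$. For indices with $\chi_{a_1,b_1}=0$ the expression vanishes. Thus
\[
\{(aa_1+bb_1-ab)^m\chi_{a_1,b_1}:0\leq a_1\leq b-1,\,0\leq b_1\leq a-1\}= \NR^m(a,b) \cup \{0\}.
\]
Removing $\{0\}$ from both sides yields the statement, provided no element of $\NR^m(a,b)$ equals zero; but this is immediate since every element of $\NR(a,b)$ is a positive integer, so its $m$-th power is positive.

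There is no real obstacle here: the only subtlety to watch is the set-theoretic reading of the right-hand side, namely that distinct $(a_1,b_1)$ with $\chi_{a_1,b_1}=1$ give distinct values of $aa_1+bb_1-ab$, which is guaranteed by the bijectivity part of Theorem~\ref{main}, and that removing $\{0\}$ does not accidentally delete a genuine nonrepresentable $m$-th power. Both points are handled in the paragraph above, so once Theorem~\ref{main} is in hand the corollary follows in a few lines.
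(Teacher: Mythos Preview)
Your proposal is correct and follows essentially the same route as the paper: both arguments combine Theorem~\ref{main} with Corollary~\ref{D3} to identify the pairs $(a_1,b_1)$ with $\chi_{a_1,b_1}=1$ as precisely those indexing the nonrepresentable numbers, and then pass to $m$-th powers. The only cosmetic difference is that the paper writes the argument as a direct double inclusion $\NR^m(a,b)\subseteq S$ and $S\subseteq \NR^m(a,b)$, whereas you first isolate $\NR(a,b)$ and then raise to the $m$-th power; the content is the same.
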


\begin{remark} 
Observe that $\NR^m(a,b)$ is a finite subset of the set of natural numbers. \end{remark}

\begin{corollary}
  \label{OT}  
For nonnegative integer $m$, the $m^{th}$ power sum of all the elements of $\NR(a,b)$, denoted as $S_m(a,b)$, can be expressed as, 
\begin{equation*}
 S_m (a,b)=\sum_{\substack{ 0\leq a_1\leq b-1\\ 0\leq b_1\leq a-1}}(aa_1+bb_1-ab)^{m}\chi_{a_1,b_1}.   
\end{equation*}
\end{corollary}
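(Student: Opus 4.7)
The plan is to derive Corollary \ref{OT} directly from Theorem \ref{main}, handling the trivial $m=0$ case via Equation \eqref{sn} and the $m\ge 1$ case by a clean change of index. The key is to establish a bijection between the index pairs $(a_1,b_1)$ with $\chi_{a_1,b_1}=1$ and the set $\NR(a,b)$; once this is in hand, the asserted formula is essentially a rewriting of the Sylvester sum.

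First I would unpack Theorem \ref{main}: every integer $n$ with $0\le n\le ab-1$ has a unique representation $n = aa_1+bb_1-ab\chi_{a_1,b_1}$ for some $0\le a_1\le b-1$ and $0\le b_1\le a-1$. If $\chi_{a_1,b_1}=1$, then by the definition of $\chi_{a_1,b_1}$ and Corollary \ref{D3} we have $N(a,b;n)=0$, so $n\in\NR(a,b)$; conversely, every element of $\NR(a,b)$ lies in $\{1,\dots,ab-1\}$ by Lemma \ref{SY1}, and must therefore appear with $\chi_{a_1,b_1}=1$ (since $\chi_{a_1,b_1}=0$ produces a representable $n=aa_1+bb_1$). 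Hence the map $(a_1,b_1)\mapsto aa_1+bb_1-ab$, restricted to pairs with $\chi_{a_1,b_1}=1$, is a bijection onto $\NR(a,b)$. This is exactly the statement of Corollary \ref{OC}, which I would invoke here.

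For $m\ge 1$, applying the above bijection and noting that the summand is zero whenever $\chi_{a_1,b_1}=0$, I would write
\begin{equation*}
S_m(a,b) \;=\; \sum_{n\in\NR(a,b)} n^m \;=\; \sum_{\substack{0\le a_1\le b-1\\ 0\le b_1\le a-1\\ \chi_{a_1,b_1}=1}} (aa_1+bb_1-ab)^m \;=\; \sum_{\substack{0\le a_1\le b-1\\ 0\le b_1\le a-1}} (aa_1+bb_1-ab)^m\chi_{a_1,b_1}.
\end{equation*}
The $m=0$ case reduces to the identity $|\NR(a,b)|=\sum\chi_{a_1,b_1}$, which is precisely Equation \eqref{sn}. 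Here one should briefly remark that $aa_1+bb_1-ab\ne 0$ whenever $\chi_{a_1,b_1}=1$: since $\gcd(a,b)=1$ and $0\le a_1\le b-1$, $0\le b_1\le a-1$, the equality $aa_1+bb_1=ab$ would force $a\mid b_1$ and $b\mid a_1$, which is impossible in these ranges. Hence $(aa_1+bb_1-ab)^0=1$ is unambiguous and the $m=0$ formula matches \eqref{sn}.

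I do not expect any substantive obstacle here; the work is really done in Theorem \ref{main} and Corollary \ref{OC}, and Corollary \ref{OT} follows by reindexing the sum. The only minor subtlety worth flagging in the write-up is the $m=0$ boundary case and the verification that $aa_1+bb_1-ab$ is strictly positive (not merely nonnegative) when $\chi_{a_1,b_1}=1$, so that the notational convention for $(aa_1+bb_1-ab)^0\chi_{a_1,b_1}$ causes no ambiguity.
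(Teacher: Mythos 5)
Your proposal is correct and takes essentially the same route as the paper: the $m=0$ case is handled via Equation \eqref{sn} and the $m\ge 1$ case via Corollary \ref{OC}, with the bijectivity needed to pass from the set identity to the sum supplied by Theorem \ref{main}. The extra points you flag---that the correspondence between pairs with $\chi_{a_1,b_1}=1$ and $\NR(a,b)$ is a bijection, and that $aa_1+bb_1-ab\neq 0$ whenever $\chi_{a_1,b_1}=1$---are reasonable bits of added care but do not alter the argument.
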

\begin{remark}
    For brevity, we sometimes write $S_m (a,b):=S_m$. 
\end{remark}

The following Lemma \ref{ml1}, whose proof is provided in Section \ref{prop} using the snake oil method \cite[Sec.\ 4.3, p.\ 118]{Wilf}, is instrumental in proving Proposition \ref{ml2}.
\begin{lemma}
    \label{ml1}
For positive integers $n$ and $m$ with $n\geq m$
\begin{equation*}
    \sum_{i=1}^{m}(-1)^{i+1}\binom{n}{i}\binom{n-i}{m-i}=\binom{n}{m}.
\end{equation*}
    \end{lemma}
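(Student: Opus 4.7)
The plan is to apply the snake oil method, as hinted by the citation to Wilf. Fixing $n$, I will treat $m$ as a free parameter and package the identity into the generating function
\[
F(x)=\sum_{m\ge 1}\left(\sum_{i=1}^{m}(-1)^{i+1}\binom{n}{i}\binom{n-i}{m-i}\right)x^{m}.
\]
The target identity says exactly that $F(x)=(1+x)^{n}-1$, and I will verify this by manipulating the series in closed form.

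The first main step is to interchange the order of summation so that $i$ becomes the outer index. After the swap, the inner sum is $\sum_{m\ge i}\binom{n-i}{m-i}x^{m}$, which, after the substitution $j=m-i$, collapses via the binomial theorem to $x^{i}(1+x)^{n-i}$. This leaves
\[
F(x)=\sum_{i=1}^{n}(-1)^{i+1}\binom{n}{i}x^{i}(1+x)^{n-i}.
\]

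The second step is to factor out $(1+x)^{n}$ and fold the remaining series back into a binomial expansion. Writing $u=-x/(1+x)$, the sum becomes $-(1+x)^{n}\sum_{i\ge 1}\binom{n}{i}u^{i}=-(1+x)^{n}\bigl((1+u)^{n}-1\bigr)$. Since $1+u=1/(1+x)$, the product $(1+x)^{n}(1+u)^{n}$ equals $1$, so everything telescopes to
\[
F(x)=(1+x)^{n}-1=\sum_{m=1}^{n}\binom{n}{m}x^{m}.
\]
Extracting the coefficient of $x^{m}$ then yields the claimed equality for every $1\le m\le n$.

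I do not expect any serious obstacle: the whole argument is a careful bookkeeping of signs and a single application of the binomial theorem in each direction. The only delicate point is tracking the sign $(-1)^{i+1}$ through the substitution $u=-x/(1+x)$ so that the factor $-1$ comes out correctly and the two $(1+x)^{n}$-terms cancel as expected. (As a sanity check, one could alternatively use the subset-of-a-subset identity $\binom{n}{i}\binom{n-i}{m-i}=\binom{n}{m}\binom{m}{i}$ together with $\sum_{i=0}^{m}(-1)^{i}\binom{m}{i}=0$, but I will follow the snake oil route as indicated by the paper.)
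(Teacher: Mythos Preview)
Your proof is correct and follows essentially the same snake-oil approach as the paper: swap the order of summation, collapse the inner sum to $x^{i}(1+x)^{n-i}$ via the binomial theorem, and then collapse the outer sum via the binomial theorem again. The only cosmetic differences are that the paper first absorbs the $i=0$ term (reducing to showing $\sum_{i=0}^{m}(-1)^{i+1}\binom{n}{i}\binom{n-i}{m-i}=0$) and then recognizes the outer sum directly as $-(1+x-x)^{n}=-1$, whereas you keep the sum from $i=1$ and reach the same endpoint through the substitution $u=-x/(1+x)$.
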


\begin{proposition}
\label{ml1'}
For nonnegative integer $n$
    \label{ml2}
    \begin{equation*}
         \sum_{\substack{0\leq a_1 \leq b-1 \\ 0\leq b_1 \leq a-1}} (aa_1+bb_1)^{n} \chi_{a_1,b_1}=\sum_{i=0}^{n} \binom{n}{i}a^{i}b^{i}S_{n-i}.
  \end{equation*}  
\end{proposition}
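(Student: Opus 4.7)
The plan is to start from the right-hand side, unfold $S_{n-i}$ using Corollary~\ref{OT}, swap the order of the (finite) summations, and recognize the resulting inner sum as a single instance of the binomial theorem.

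Concretely, I would substitute
$$S_{n-i} = \sum_{\substack{0\le a_1\le b-1\\ 0\le b_1\le a-1}} (aa_1+bb_1-ab)^{n-i}\chi_{a_1,b_1}$$
into $\sum_{i=0}^n \binom{n}{i}a^i b^i S_{n-i}$ and interchange the sums to obtain
$$\sum_{\substack{0\le a_1\le b-1\\ 0\le b_1\le a-1}} \chi_{a_1,b_1}\sum_{i=0}^n \binom{n}{i}(ab)^{i}(aa_1+bb_1-ab)^{n-i}.$$
By the binomial theorem the inner sum equals $\bigl(ab+(aa_1+bb_1-ab)\bigr)^n = (aa_1+bb_1)^n$, so the whole expression collapses to $\sum \chi_{a_1,b_1}(aa_1+bb_1)^n$, which is precisely the left-hand side. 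As a sanity check the $n=0$ case recovers $S_0 = \lvert \NR(a,b) \rvert$ as recorded in~\eqref{sn}, and a direct hand calculation with $n=1,2$ confirms that the alternating corrections work out.

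The main obstacle is psychological rather than technical: this derivation is essentially one application of the binomial theorem, yet the paper flags Lemma~\ref{ml1} as ``instrumental'' in the proof. This signals that the authors likely travel a longer route --- presumably expanding $(aa_1+bb_1-ab\chi_{a_1,b_1})^n$ directly from Theorem~\ref{main}, exploiting the collapse $\chi_{a_1,b_1}^i = \chi_{a_1,b_1}$ for $i\ge 1$, and then reindexing the resulting double sum in a way that produces the alternating binomial identity supplied by Lemma~\ref{ml1}. Since the proposition is a purely algebraic consequence of Corollary~\ref{OT}, both routes must terminate in the same equality; I would present the binomial-theorem derivation above for brevity and cite Lemma~\ref{ml1} only if it is needed for a later step.
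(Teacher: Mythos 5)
Your proof is correct, and it takes a genuinely different and shorter route than the paper. You start from the right-hand side, expand each $S_{n-i}$ via Corollary~\ref{OT}, interchange the two finite sums, and collapse the inner sum $\sum_{i=0}^{n}\binom{n}{i}(ab)^{i}(aa_1+bb_1-ab)^{n-i}$ by the binomial theorem to $(aa_1+bb_1)^n$; since $\chi_{a_1,b_1}$ is factored out and appears only to the first power, no identity about powers of $\chi$ or alternating binomial sums is needed. The paper instead argues by induction on $n$: it writes $(aa_1+bb_1)^{k+1}\chi_{a_1,b_1}$ as $(aa_1+bb_1-ab)^{k+1}\chi_{a_1,b_1}$ minus a correction sum (using $\chi_{a_1,b_1}^i=\chi_{a_1,b_1}$ for $i\ge 1$), invokes the induction hypothesis on the correction terms, and then needs the alternating identity of Lemma~\ref{ml1} (itself proved by the snake oil method) to recombine the resulting double sum into $\sum_i\binom{k+1}{i}a^ib^iS_{k+1-i}$. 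Your guess about the authors' longer route is accurate, and your argument is logically sound and non-circular, since Corollary~\ref{OT} is established independently of the proposition; moreover, with your derivation Lemma~\ref{ml1} becomes entirely dispensable, as it is not used anywhere else in the paper. What the paper's route buys is only that it makes explicit the combinatorial recombination your binomial-theorem step performs in one stroke; what yours buys is brevity and the elimination of an auxiliary lemma and the induction.
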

Applying Theorem \ref{main} and Proposition \ref{ml2}, we establish our main result, Theorem \ref{mt}, which is presented in Section \ref{sec3}. 

\begin{theorem}
    \label{mt}
    Let $a,b\in \mathbb{N}$ such that $\gcd(a,b)=1$, then for $m\geq1$
    \begin{align*}
         mabS_{m-1}&= \sum_{\substack{0\leq a_1 \leq b-1 \\ 0\leq b_1 \leq a-1}} (aa_1+bb_1)^m-mab\sum_{j=1}^{m-1}\binom{m-1}{j}a^{j}b^{j}S_{m-1-j} \\ &\quad +\sum_{i=2}^{m}(-1)^{i}\binom{m}{i}a^{i}b^{i}\sum_{j=0}^{m-i}\binom{m-i}{j}a^{j}b^{j}S_{m-i-j}\\& \quad\quad-\frac{1}{m+1}\sum_{j=0}^{m}\binom{m+1}{j}B_{j}(ab-1)^{m+1-j}. 
    \end{align*}
    \end{theorem}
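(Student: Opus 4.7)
The plan is to start from Equation~\eqref{ET} and unpack both sides simultaneously.

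First, I would expand the right-hand side of \eqref{ET} using the binomial theorem. The key observation is that $\chi_{a_1,b_1}\in\{0,1\}$, so $\chi_{a_1,b_1}^{i}=\chi_{a_1,b_1}$ for every $i\ge 1$. This gives
\begin{equation*}
(aa_1+bb_1-ab\chi_{a_1,b_1})^{m}=(aa_1+bb_1)^{m}+\sum_{i=1}^{m}\binom{m}{i}(-ab)^{i}(aa_1+bb_1)^{m-i}\chi_{a_1,b_1}.
\end{equation*}
Summing over $0\le a_1\le b-1,\ 0\le b_1\le a-1$ and applying Proposition~\ref{ml2} to each inner sum turns the right-hand side of \eqref{ET} into
\begin{equation*}
\sum_{\substack{0\le a_1\le b-1\\ 0\le b_1\le a-1}}(aa_1+bb_1)^{m}+\sum_{i=1}^{m}(-1)^{i}\binom{m}{i}a^{i}b^{i}\sum_{j=0}^{m-i}\binom{m-i}{j}a^{j}b^{j}S_{m-i-j}.
\end{equation*}

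Second, I would isolate the target quantity $S_{m-1}$. Inside the $i=1$ block, the $j=0$ summand equals $-mab\cdot S_{m-1}$; the remaining $j=1,\ldots,m-1$ contributions reproduce the $-mab\sum_{j=1}^{m-1}\binom{m-1}{j}a^{j}b^{j}S_{m-1-j}$ term of the statement, while the $i=2,\ldots,m$ block yields the triple-sum term verbatim. On the left-hand side of \eqref{ET}, observe that $\sum_{n=0}^{ab-1}n^{m}=\sum_{n=1}^{ab-1}n^{m}$ since $m\ge 1$, so Theorem~\ref{bernoulli} applied at $n=ab-1$ evaluates it as $\tfrac{1}{m+1}\sum_{j=0}^{m}\binom{m+1}{j}B_{j}(ab-1)^{m+1-j}$. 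Transposing this to the right with a minus sign and collecting pieces recovers the claimed formula.

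Once the collapse $\chi_{a_1,b_1}^{i}=\chi_{a_1,b_1}$ is noticed and Proposition~\ref{ml2} is in hand, the computation is essentially bookkeeping. The only delicate step is splitting off the $i=1,\,j=0$ summand cleanly: the single term $-mab\,S_{m-1}$ must be separated from the rest of the $i=1$ block so that the remaining Sylvester sums $S_{m-1-j}$ and $S_{m-i-j}$ fall into the two distinct summations appearing in the statement without overlap. No new combinatorial identity is required beyond Proposition~\ref{ml2} and Bernoulli's formula.
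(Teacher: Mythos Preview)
Your proposal is correct and follows essentially the same route as the paper: expand the right-hand side of Equation~\eqref{ET} via the binomial theorem, collapse $\chi_{a_1,b_1}^{i}$ to $\chi_{a_1,b_1}$ for $i\ge 1$, apply Proposition~\ref{ml2} to each resulting inner sum, peel off the $i=1,\ j=0$ term $-mab\,S_{m-1}$, and finish by evaluating the left-hand side with Bernoulli's formula. The paper's proof is line-by-line the computation you outlined.
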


From Theorem \ref{mt}, we derive two corollaries-Corollary \ref{I2} and Corollary \ref{I3}-which provide explicit expressions for the Sylvester number $S_0(a,b)$ and the sum of nonrepresentable numbers $S_1(a,b)$. The proofs of these corollaries are presented in Section \ref{sec3}.
\begin{corollary}
\label{I2}
\label{DNM2}
    Let $a,b\in \mathbb{N}$ such that $\gcd(a,b)=1$, then $$S_0 (a,b)=\frac{1}{2}(a-1)(b-1).$$
\end{corollary}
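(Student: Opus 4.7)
The plan is to specialize Theorem \ref{mt} at $m=1$, where two of the four terms on the right collapse to empty sums, leaving only a double sum over the box $[0,b-1]\times[0,a-1]$ and a short Bernoulli term that can be evaluated explicitly.

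Setting $m=1$ in Theorem \ref{mt}, the inner sum $\sum_{j=1}^{m-1}$ becomes $\sum_{j=1}^{0}$ and the outer sum $\sum_{i=2}^{m}$ becomes $\sum_{i=2}^{1}$; both are empty and contribute nothing. What remains is
\[
abS_0 \;=\; \sum_{\substack{0\le a_1\le b-1\\ 0\le b_1\le a-1}}(aa_1+bb_1)\;-\;\tfrac{1}{2}\sum_{j=0}^{1}\binom{2}{j}B_j(ab-1)^{2-j}.
\]
Next I would evaluate the double sum by splitting it and using $\sum_{k=0}^{N-1}k=\tfrac{N(N-1)}{2}$:
\[
\sum_{a_1=0}^{b-1}\sum_{b_1=0}^{a-1}(aa_1+bb_1) \;=\; a\cdot a\cdot\tfrac{b(b-1)}{2}+b\cdot b\cdot\tfrac{a(a-1)}{2} \;=\; \tfrac{ab(2ab-a-b)}{2}.
\]

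Then I would substitute the values $B_0=1$ and $B_1=\tfrac{1}{2}$ from Equation \eqref{IBN} to simplify
\[
\tfrac{1}{2}\bigl[(ab-1)^{2}+(ab-1)\bigr] \;=\; \tfrac{ab(ab-1)}{2}.
\]
Subtracting and factoring yields
\[
abS_0 \;=\; \tfrac{ab}{2}\bigl(2ab-a-b-(ab-1)\bigr) \;=\; \tfrac{ab}{2}(ab-a-b+1) \;=\; \tfrac{ab(a-1)(b-1)}{2},
\]
and dividing through by $ab$ gives the claim.

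There is no real obstacle here; the proof is a routine substitution. The only item worth flagging is the convention that the two empty index ranges (the $j$-sum up to $0$ and the $i$-sum from $2$ to $1$) are taken to be zero, after which the identity reduces to elementary arithmetic with the first two Bernoulli numbers.
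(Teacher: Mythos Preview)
Your proof is correct and follows essentially the same route as the paper: specialize Theorem~\ref{mt} at $m=1$, drop the two empty sums, evaluate the box sum $\sum(aa_1+bb_1)$ and the Bernoulli term with $B_0=1$, $B_1=\tfrac12$, and simplify to $\tfrac12 ab(a-1)(b-1)$. The only cosmetic difference is that the paper justifies the vanishing via the convention $\binom{n}{k}=0$ for $k\geq n$ rather than calling the sums empty, but the argument is the same.
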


\begin{corollary}
\label{I3}
\label{DNM3}
    Let $a,b\in \mathbb{N}$ such that $\gcd(a,b)=1$, then $$S_1 (a,b)=\frac{1}{12}(a-1)(b-1)(2ab-a-b-1).$$
\end{corollary}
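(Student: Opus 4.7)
The plan is to specialize Theorem~\ref{mt} to $m=2$ and evaluate each of the four resulting terms in closed form. For the leading double sum, I would expand $(aa_1+bb_1)^2 = a^2 a_1^2 + 2ab\, a_1 b_1 + b^2 b_1^2$ and reduce each piece using the classical power-sum identities $\sum_{k=0}^{n-1} k = n(n-1)/2$ and $\sum_{k=0}^{n-1} k^2 = (n-1)n(2n-1)/6$. This yields an explicit polynomial in $a$ and $b$ containing three summands: one of shape $a^3 b$, one mixed cross-term $a^2 b^2 (a-1)(b-1)/2$, and one of shape $a b^3$.

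Next I would handle the two intermediate terms of Theorem~\ref{mt}. At $m=2$ the middle sum collapses to its single $j=1$ contribution $2ab\cdot ab\cdot S_0$, and the alternating outer sum collapses to its single $i=2$ contribution $a^2 b^2 S_0$. Invoking Corollary~\ref{I2} to substitute $S_0 = (a-1)(b-1)/2$ turns both into explicit polynomials, and a quick check shows that together with the cross-term from the square expansion above the three $a^2b^2(a-1)(b-1)$ pieces cancel in the combination $\tfrac{1}{2} - 1 + \tfrac{1}{2} = 0$. This is a useful simplification that is worth recording as it significantly reduces the bookkeeping.

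For the Bernoulli tail I would substitute $B_0 = 1$, $B_1 = 1/2$, $B_2 = 1/6$ from Equation~\eqref{IBN} into $\tfrac{1}{3}\sum_{j=0}^{2}\binom{3}{j} B_j (ab-1)^{3-j}$ to obtain an explicit cubic in $ab-1$. Assembling the three surviving pieces and dividing by $2ab$ produces $S_1$. The main obstacle is the final algebraic step: verifying that the resulting polynomial in $a,b$ factors as $\tfrac{1}{12}(a-1)(b-1)(2ab-a-b-1)$. I would carry this out by expanding the claimed factorization and matching coefficients of monomials in $a, b$; as a cross-check I would evaluate both sides at small coprime pairs such as $(a,b) = (3,5)$, where $\NR(3,5) = \{1,2,4,7\}$ gives $S_1 = 14$ in agreement with the formula, before committing to the final algebra.
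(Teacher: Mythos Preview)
Your plan is correct and follows essentially the same route as the paper: specialize Theorem~\ref{mt} to $m=2$, expand the square and evaluate via the standard power-sum identities, combine the $-2a^2b^2S_0$ and $+a^2b^2S_0$ terms with the cross-term (your $\tfrac{1}{2}-1+\tfrac{1}{2}=0$ cancellation is exactly the simplification the paper uses), insert the Bernoulli values, and finish with the polynomial factorization. The numerical check at $(a,b)=(3,5)$ is a nice addition but not needed for the argument.
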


Furthermore, Theorem \ref{DNM1} establishes a criterion for determining whether a given integer $n\in \NR(a,b)$. To proceed, we first introduce the following notation:
Let $m$ be a nonnegative integer, and let $a, b \in \mathbb{N}$ with $gcd(a,b)=1$. Further, suppose $b=aq_{0}+r_{0}$ where $q_0$ and $r_0$ are nonnegative integers with $0< r_0<a$. We define the following symbols:
\begin{itemize}
    \item $r:= m$ (mod $a$) such that $ 0\leq r<a$.
    \item $r_{m}:= r$ (mod $r_0$) such that $ 0\leq r_m<r_0$.
    \item $k_{m}\equiv -a_{r_0}^{-1}r_m$ (mod $r_0$) such that $ 0\leq k_m<r_0$.
\end{itemize}
 
\begin{theorem}
\label{DNM1}
   With the notation above, a nonnegative integer $m$ can be expressed in the form $ax+by$ for some nonnegative integers $x$ and $y$ if and only if
$$\left\lfloor \frac{m}{a} \right\rfloor \geq q_{0}\frac{m}{b}+k_m.$$
\end{theorem}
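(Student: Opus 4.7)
The plan is to use the division identity $b = aq_0 + r_0$ as a change of variables. If $m = ax + by$ with $x, y \ge 0$, then substitution gives $m = a(x + q_0 y) + r_0 y$. Setting $X := x + q_0 y$, the question of representability becomes: does there exist an integer $X$ and a nonnegative integer $y$ with $aX + r_0 y = m$ such that $x = X - q_0 y$ is also nonnegative? Solving $y = (m - aX)/r_0$ and then simplifying $x = X - q_0 y$ with the help of $b = aq_0 + r_0$ yields the pleasing identity $x = (Xb - q_0 m)/r_0$. Thus representability of $m$ is equivalent to the existence of an integer $X$ satisfying three conditions: $aX \equiv m \pmod{r_0}$ (so that $y$ is an integer), $X \le \lfloor m/a \rfloor$ (so that $y \ge 0$), and $X \ge q_0 m / b$ (so that $x \ge 0$).

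Next, I would pin down the residue class of $X$ explicitly. Since $\gcd(a, r_0) = \gcd(a, b) = 1$, the inverse $a_{r_0}^{-1}$ of $a$ modulo $r_0$ exists, and the first condition reduces to $X \equiv a_{r_0}^{-1} m \pmod{r_0}$. Writing $m = a \lfloor m/a \rfloor + r$ with $r = m \bmod a$ and reducing modulo $r_0$, one obtains $a_{r_0}^{-1} m \equiv \lfloor m/a \rfloor + a_{r_0}^{-1} r \pmod{r_0}$. Using $r \equiv r_m \pmod{r_0}$ together with the definition $k_m \equiv -a_{r_0}^{-1} r_m \pmod{r_0}$, this rearranges to $X \equiv \lfloor m/a \rfloor - k_m \pmod{r_0}$.

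Because $0 \le k_m < r_0$, the integer $\lfloor m/a \rfloor - k_m$ is the unique element of its residue class lying in the interval $(\lfloor m/a \rfloor - r_0, \lfloor m/a \rfloor]$, and therefore is the largest admissible $X$ satisfying the first two conditions. A valid $X$ exists if and only if this largest candidate also satisfies the third condition, i.e.\ iff $\lfloor m/a \rfloor - k_m \ge q_0 m / b$, which is exactly the inequality in the theorem; note also that this forces $\lfloor m/a \rfloor - k_m \ge 0$, so the recovered $y$ and $x$ are genuinely nonnegative. The main obstacle is the careful bookkeeping of the nested reductions $m \mapsto r \mapsto r_m$ against the three-layer definition of $k_m$; once the congruence $X \equiv \lfloor m/a \rfloor - k_m \pmod{r_0}$ is correctly established, everything else follows from the elementary identity $r_0 + a q_0 = b$.
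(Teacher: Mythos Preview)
Your argument is correct and rests on the same substitution $b=aq_0+r_0$ and the same congruence identity as the paper, but you organize it more cleanly. The paper proves the two implications separately: in the forward direction it assumes a representation with $0\le y<a$, introduces $k_m$ as the smallest index with $0\le r_0y-k_ma<a$, derives the inequality, and only afterwards checks that this $k_m$ satisfies the congruence $k_m\equiv -a_{r_0}^{-1}r_m\pmod{r_0}$; in the converse direction it starts from the inequality and exhibits explicit $x$ and $y$. You instead recast representability as the existence of a single integer $X$ subject to a residue condition, an upper bound $X\le\lfloor m/a\rfloor$, and a lower bound $X\ge q_0m/b$, then observe that the largest admissible $X$ in the required residue class is exactly $\lfloor m/a\rfloor-k_m$. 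This handles both directions at once, avoids the ad hoc introduction of $k_m$ via the inequality $0\le r_0y-k_ma<a$, and makes the role of $k_m$ transparent from the start.
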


Theorem \ref{DNM1} also generalizes Lemma 6 and Lemma 8 in \cite{D21}. We recall these lemmas below as Lemma \ref{D1} and Lemma \ref{D2}, respectively.
\begin{lemma}[Binner 2021]
\label{D1}
    For any $a\in \mathbb{N}$, a number $m\in \mathbb{N}$ can be expressed in the form $ax+(a+1)y$ for nonnegative integers $x$ and $y$ if and only if $\lfloor \frac{m}{a}\rfloor\geq \frac{m}{a+1}.$
\end{lemma}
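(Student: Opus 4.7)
The plan is to give an elementary direct proof that exploits the special structure $b=a+1$: since the two coin denominations differ by $1$, making the substitution $s=x+y$ linearises the problem. The lemma is also the $b=a+1$ specialization of Theorem \ref{DNM1} (with $b=a\cdot 1+1$, giving $q_0=r_0=1$, $r_m=0$, and $k_m=0$), which agrees with and cross-checks the direct argument, but the direct route is shorter and self-contained.

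For the forward direction, I suppose $m=ax+(a+1)y$ with $x,y\in\mathbb{Z}_{\geq 0}$. Writing the single sum in two ways gives the pair of identities $a(x+y)=m-y$ and $(a+1)(x+y)=m+x$. Since $x,y\geq 0$, these yield $a(x+y)\leq m$ and $(a+1)(x+y)\geq m$, so the integer $s:=x+y$ is sandwiched as $m/(a+1)\leq s\leq m/a$. The upper bound forces $s\leq\lfloor m/a\rfloor$, and combining with the lower bound immediately gives $\lfloor m/a\rfloor\geq m/(a+1)$.

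For the converse, I suppose $\lfloor m/a\rfloor\geq m/(a+1)$ and apply the division algorithm to write $m=aq+r$ with $q=\lfloor m/a\rfloor$ and $0\leq r<a$. Multiplying the hypothesis by $a+1$ and substituting $m=aq+r$ rearranges to $q\geq r$, so $x:=q-r$ and $y:=r$ are nonnegative integers, and a direct check gives $ax+(a+1)y=a(q-r)+(a+1)r=aq+r=m$. The only point requiring care is that $x=q-r$ is nonnegative, which is exactly what the hypothesis delivers; consequently I do not foresee any real obstacle, as the argument is a short manipulation of the division algorithm together with the identity $ax+(a+1)y=a(x+y)+y$.
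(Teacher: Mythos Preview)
Your proof is correct in both directions. The sandwich argument on $s=x+y$ for the forward direction and the division-algorithm computation $q\geq r$ for the converse are clean and fully rigorous.

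As for comparison with the paper: the paper does not give an independent proof of Lemma~\ref{D1}. It merely recalls the statement from \cite{D21} and observes that it is the specialization $b=a+1$ of Theorem~\ref{DNM1}. If one actually carries out that specialization (as you note: $q_0=r_0=1$, $r_m=k_m=0$), the converse direction of the proof of Theorem~\ref{DNM1} collapses precisely to your construction $x=q-r$, $y=r$; so on that half the two arguments coincide. Your forward direction, however, is genuinely different in presentation: rather than tracking the division-algorithm identities $q=x+q_0y+k_m$ and $r=r_0y-k_ma$ as in the proof of Theorem~\ref{DNM1}, you use the two-sided estimate $m/(a+1)\leq x+y\leq m/a$ coming from $a(x+y)=m-y$ and $(a+1)(x+y)=m+x$. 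This is shorter and more transparent for the special case $b=a+1$, at the cost of not generalizing directly to arbitrary $b$; the paper's route through Theorem~\ref{DNM1} is heavier but yields the full result for all coprime pairs $(a,b)$ at once.
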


\begin{lemma}[Binner 2021]
\label{D2}
    For any odd $a\in \mathbb{N}$, a number $m\in \mathbb{N}$ can be expressed in the form $ax+(a+2)y$ for nonnegative integers $x$ and $y$ if and only if one of the following statements hold:
    \begin{enumerate}
       \item $m \Mod a$ is even and  $\lfloor \frac{m}{a} \rfloor\geq \frac{m}{a+2}$.
\item $m \Mod a$ is odd and  $\lfloor \frac{m}{a}\rfloor \geq \frac{m}{a+2}+1$.
    \end{enumerate}
\end{lemma}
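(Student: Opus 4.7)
The plan is to reduce representability modulo $a$ and then identify the resulting smallest valid $y$-candidate with the quantity encoded by $k_m$. Since $b = aq_0 + r_0$ and $\gcd(a,b)=1$, we have $\gcd(a,r_0)=1$, so $a_{r_0}^{-1}$ exists and $k_m$ is well defined. Any representation $m = ax + by$ with $y \geq 0$ forces $by \equiv m \pmod{a}$; because $b \equiv r_0 \pmod{a}$ and $r = m \bmod a$, this is the congruence $r_0 y \equiv r \pmod{a}$, which has a unique solution $y_0 \in \{0,1,\dots,a-1\}$. Since the nonnegative $y$'s satisfying the congruence are exactly $y_0, y_0+a, y_0+2a,\dots$, and $x = (m-by)/a$ is maximized at the smallest such $y$, the number $m$ is representable if and only if $by_0 \leq m$.

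Next, I would make $y_0$ explicit in the notation of the theorem. Write $r_0 y_0 = r + k a$ for some nonnegative integer $k$; reducing modulo $r_0$ gives $k a \equiv -r \equiv -r_m \pmod{r_0}$, hence $k \equiv -a_{r_0}^{-1} r_m \pmod{r_0}$. A quick range check shows that taking $k \in [0,r_0)$ produces $y_0 = (r + k a)/r_0 \in [0, a-1]$ (the upper bound uses $r \leq a-1$ and $k \leq r_0 - 1$), so the unique admissible choice is $k = k_m$, giving the explicit formula $y_0 = (r + k_m a)/r_0$.

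The final step is an algebraic translation. Using $b = aq_0 + r_0$,
\[
by_0 \;=\; aq_0 y_0 + r_0 y_0 \;=\; aq_0 y_0 + r + k_m a \;=\; a(q_0 y_0 + k_m) + r,
\]
and comparing with $m = a \lfloor m/a \rfloor + r$ shows that $m \geq by_0$ is equivalent to $\lfloor m/a \rfloor \geq q_0 y_0 + k_m$. To convert this to the stated form, I would multiply the target inequality $\lfloor m/a \rfloor \geq q_0 m/b + k_m$ through by $b$ and substitute $\lfloor m/a \rfloor = (m-r)/a$; simplifying with $b - aq_0 = r_0$ and $r_0 y_0 = r + k_m a$ reduces it to $m r_0 \geq b r_0 y_0$, i.e.\ $m \geq by_0$. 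Thus all three forms are equivalent, and the representability criterion $m \geq by_0$ is exactly the inequality in the theorem.

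The step I expect to be most delicate is the bookkeeping that pins down $y_0 = (r + k_m a)/r_0$: one must verify that the congruence class determined by $k_m \in [0,r_0)$ is the one that lands $y_0$ in $[0,a-1]$, which rests on the bounds on $r$, $r_0$, and $k_m$ together with $\gcd(a,r_0)=1$. Once this identification is secured, the remaining algebraic equivalence between $m \geq by_0$ and $\lfloor m/a \rfloor \geq q_0 m/b + k_m$ is a routine manipulation, and specializing to $b = a+1$ (where $q_0=1$, $r_0=1$, $k_m=0$) and $b = a+2$ with $a$ odd (where $q_0=1$, $r_0=2$, and $k_m \in \{0,1\}$ depending on the parity of $r$) recovers Lemmas \ref{D1} and \ref{D2} as sanity checks.
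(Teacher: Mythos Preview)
Your proposal is correct, and the overall strategy matches the paper's: the paper does not prove Lemma~\ref{D2} directly but establishes the general criterion Theorem~\ref{DNM1} and observes that Lemmas~\ref{D1} and~\ref{D2} are the specializations $b=a+1$ and $b=a+2$; you do the same, explicitly carrying out the specialization with $q_0=1$, $r_0=2$, $k_m\in\{0,1\}$ according to the parity of $r$.

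Within the proof of Theorem~\ref{DNM1} itself, your organization differs from the paper's, though the content is the same. The paper argues the two directions separately: starting from a representation $m=ax+by$ with $0\le y<a$, it rewrites $m=a(x+q_0y+k_m)+(r_0y-k_ma)$ to read off $q$ and $r$, solves for $x\ge 0$, and manipulates to the inequality; for the converse it explicitly reconstructs $x=q-\dfrac{q_0(r+k_ma)+k_mr_0}{r_0}$ and $y=\dfrac{r+k_ma}{r_0}$. You instead invoke the equivalent criterion ``$m$ is representable iff $m\ge b y_0$'' where $y_0\in[0,a-1]$ is the unique solution of $r_0y\equiv r\pmod a$, identify $y_0=(r+k_ma)/r_0$ via the congruence modulo $r_0$ and a range check, and then show $m\ge b y_0 \iff \lfloor m/a\rfloor\ge q_0\,m/b+k_m$ by a single algebraic manipulation. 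Your $y_0$ is exactly the paper's reconstructed $y$, so the two arguments are reorderings of one another; your framing is a bit more conceptual, while the paper's is more hands-on with the division algorithm.
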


\section{Proof of Theorem \ref{main}}
\label{sec22}
\begin{proof}
For any integer $n$ such that $0\leq n \leq ab-1$, suppose that $N(a,b;n)\geq 2$. By Corollary \ref{D3}, it follows that $n\geq ab$, which contradicts the assumption that $0\leq n \leq ab-1$. Therefore, $N(a,b;n)<2$. Applying Lemma \ref{SY1} and Theorem \ref{SY2}, we conclude that $N(a,b;n)=0$ or $N(a,b;n)=1$. If $N(a,b;n)=0$, then using Corollary \ref{D3}, we have $n=aa_1+bb_1-ab$ for some $0\leq a_1\leq b-1$ and $0\leq b_1\leq a-1$. Since $n$ cannot be zero in this case, it implies $n>0$. Thus,
\begin{equation}
\label{NRN}
    aa_1+bb_1-ab>0.
\end{equation}

Similarly, when $N(a,b;n)=1$, we have $n=aa_1+bb_1$ for some $0\leq a_1\leq b-1$ and $0\leq b_1\leq a-1$ . Given that $n<ab$, it implies
\begin{equation}
\label{RN}
    aa_1+bb_1-ab<0.
\end{equation}
Using Equations \eqref{NRN} and \eqref{RN}, every integer $n$ such that $0\leq n\leq ab-1$ can be expressed as $$n=aa_1+bb_1-ab\chi_{a_1,b_1},$$ where 
\begin{displaymath}
    \chi_{a_1,b_1} = 
    \begin{cases} 
        1, & \quad \text{if } aa_1 + bb_1 - ab > 0; \\  
        0, & \quad \text{if } aa_1 + bb_1 - ab < 0.  
    \end{cases}
\end{displaymath}
Since the cardinality of the set $\{n:0\leq n \leq ab-1 \}$ is $ab$, which is the same as the cardinality of the set $\{aa_1+bb_1-ab\chi_{a_1,b_1}: 0\leq a_1\leq b-1, 0\leq b_1\leq a-1  \}$. Therefore, the result follows.
    \end{proof}

\begin{proof}[Proof of Corollary \ref{OC}]
Let us denote $S=\{(aa_1+bb_1-ab)^m\chi_{a_1,b_1}: 0\leq a_1\leq b-1, 0\leq b_1\leq a-1 \}\setminus \{0\}.$
Consider any $n\in \NR^m(a,b)$. By Equation \eqref{NRm}, this implies $n=k^m$ for some $k\in \NR(a,b)$. Consequently, $N(a,b;k)=0$, by Corollary \ref{D3}, there exist some $a_1$ with $0\leq a_1\leq b-1$ and $b_1$ with $0\leq b_1\leq a-1$
such that
$$k=aa_1+bb_1-ab>0.$$
Since $aa_1+bb_1-ab>0$, it follows from Theorem \ref{main} that the corresponding $\chi_{a_1,b_1}=1$. Thus, $n$ can be expressed as
$$n=k^m \chi_{a_1,b_1}=(aa_1+bb_1-ab)^m\chi_{a_1,b_1}.$$
This shows that $\NR^m(a,b) \subseteq S.$
Now suppose $n\in S$. This implies that $n=(aa_1+bb_1-ab)^m\chi_{a_1,b_1}>0$, for some $a_1$ with $0\leq a_1\leq b-1$ and $b_1$ with $0\leq b_1\leq a-1$.
Since $(aa_1+bb_1-ab)^m\chi_{a_1,b_1}>0$, it follows that $\chi_{a_1,b_1}=1$. By Theorem \ref{main}, we have $k=aa_1+bb_1-ab>0$. From Corollary \ref{D3}, it follows that $N(a,b;k)=0$, and thus $n\in \NR^m(a,b)$. Therefore, $S\subseteq \NR^m(a,b) .$
\end{proof}

\begin{proof}[Proof of Corollary \ref{OT}]
    For $m=0$, the proof follows using Equations \eqref{snm} and \eqref{sn}. When $m>0$, the proof follows using Corollary \ref{OC}. 
    \end{proof}

\section{Proof of Proposition \ref{ml1'}}
\label{prop}
To establish Proposition \ref{ml1'}, we use a substitution based on Lemma \ref{ml1}, simplifying the calculations. Therefore, we begin by first proving Lemma \ref{ml1}.
\begin{proof}[Proof of Lemma \ref{ml1}]
 First, observe that for $m\geq 1$,
\begin{equation*}
    \sum_{i=1}^{m}(-1)^{i+1}\binom{n}{i}\binom{n-i}{m-i}= \binom{n}{m}+\sum_{i=0}^{m}(-1)^{i+1}\binom{n}{i}\binom{n-i}{m-i}.
\end{equation*}
  Therefore, to prove the required claim, it suffices to show that for all $m\geq 1$,
\begin{equation*}
    \sum_{i=0}^{m}(-1)^{i+1}\binom{n}{i}\binom{n-i}{m-i}=0.
\end{equation*}

 Define 
 \begin{align*}
 f(m)&=\sum_{i=0}^{m}(-1)^{i+1}\binom{n}{i}\binom{n-i}{m-i}.
      \end{align*}
      \begin{align*}
       \intertext{Using the fact that $\binom{n}{k}=0$ when either $k>n$ or $k<0$, we can rewrite $f(m)$ as follows:}
 f(m)=\sum_{i=0}^{n}(-1)^{i+1}\binom{n}{i}\binom{n-i}{m-i}.
      \end{align*}
 Let $$F(x)=\sum_{m=0}^{\infty}f(m)x^m,$$ be the generating function for $f(m)$. Notice that
\begin{align*}
    \sum_{m=0}^{\infty}f(m)x^m&= \sum_{m=0}^{\infty}\sum_{i=0}^{n}(-1)^{i+1}\binom{n}{i}\binom{n-i}{m-i}x^m\\
&=\sum_{i=0}^{n} (-1)^{i+1}\binom{n}{i}x^{i}\sum_{m=0}^{\infty}\binom{n-i}{m-i}x^{m-i}\\
&=\sum_{i=0}^{n} (-1)^{i+1}\binom{n}{i}x^{i}\sum_{m=i}^{n}\binom{n-i}{m-i}x^{m-i}.\\
\intertext{Substitute $m-i=k$, we have}
&=\sum_{i=0}^{n}  (-1)^{i+1}\binom{n}{i}x^{i}\sum_{k=0}^{n-i}\binom{n-i}{k}x^{k}\\
&=\sum_{i=0}^{n} (-1)^{i+1}\binom{n}{i}x^{i}(1+x)^{n-i}\\
&=-\sum_{i=0}^{n} (-1)^{i}\binom{n}{i}x^{i}(1+x)^{n-i}\\
&=-(1+x-x)^n\\
&=-1.
\end{align*}
This implies that $f(m)=0$ for $m\geq1$.
  \end{proof}

Now, we proceed with the proof of Proposition \ref{ml1'}. We establish the result using the Principle of Mathematical Induction. For $n=1$ note that 
\begin{align*}
    (aa_1+bb_1) \chi_{a_1,b_1}&= (aa_1+bb_1-ab) \chi_{a_1,b_1}+ab\chi_{a_1,b_1}.\\
    \intertext{Running summation over $a_1$ and $b_1$ on both the sides}
    \sum_{\substack{0\leq a_1 \leq b-1 \\ 0\leq b_1 \leq a-1}} (aa_1+bb_1) \chi_{a_1,b_1}&=\sum_{\substack{0\leq a_1 \leq b-1 \\ 0\leq b_1 \leq a-1}}  (aa_1+bb_1-ab) \chi_{a_1,b_1}+ab\sum_{\substack{0\leq a_1 \leq b-1 \\ 0\leq b_1 \leq a-1}} \chi_{a_1,b_1}\\
    &=S_{1}+abS_{0} \text{ \text{ }\text{ }  \text{ }      [By applying Corollary \ref{OT}]} \\
    &=\sum_{i=0}^{1}\binom{1}{i}a^{i}b^{i}S_{1-i}.
\end{align*}
Hence, the result holds for the $n=1$ case.

Now assume that the result holds for $n=k$, that is, we assume the following:
\begin{equation*}
    \sum_{\substack{0\leq a_1 \leq b-1 \\ 0\leq b_1 \leq a-1}} (aa_{1}+bb_{1})^k \chi_{a_1,b_1}=\sum_{i=0}^{k} \binom{k}{i}a^{i}b^{i}S_{k-i}.
\end{equation*}
Finally, we prove that the result holds for the $n=k+1$ case. That is, we need to show the following 
\begin{equation*}
   \sum_{\substack{0\leq a_1 \leq b-1 \\ 0\leq b_1 \leq a-1}} (aa_{1}+bb_{1})^{k+1} \chi_{a_1,b_1}=\sum_{i=0}^{k+1} \binom{k+1}{i}a^{i}b^{i}S_{k+1-i}. 
\end{equation*}
 Consider
      \begin{align*}
     (aa_{1}+bb_{1})^{k+1} \chi_{a_1,b_1}&= \begin{multlined}[t]\sum_{i=0}^{k+1}(-1)^{i}\binom{k+1}{i}(aa_1+bb_1)^{k+1-i}a^ib^i\chi_{a_1,b_1}\\[2ex]-\sum_{i=1}^{k+1}(-1)^{i}\binom{k+1}{i}(aa_1+bb_1)^{k+1-i}a^ib^i\chi_{a_1,b_1}\end{multlined}\\
    &=(aa_1+bb_1-ab)^{k+1}\chi_{a_1,b_1}\\&\quad-\sum_{i=1}^{k+1}(-1)^{i}\binom{k+1}{i}(aa_1+bb_1)^{k+1-i}a^ib^i\chi_{a_1,b_1}.\\
     \intertext{Running summation over $a_1$ and $b_1$ on both the sides}
\sum_{\substack{0\leq a_1 \leq b-1 \\ 0\leq b_1 \leq a-1}} (aa_{1}+bb_{1})^{k+1} \chi_{a_1,b_1} &=\begin{multlined}[t]
    \sum_{\substack{0\leq a_1 \leq b-1 \\ 0\leq b_1 \leq a-1}} (aa_1+bb_1-ab)^{k+1}\chi_{a_1,b_1}\\[2ex]-\sum_{\substack{0\leq a_1 \leq b-1 \\ 0\leq b_1 \leq a-1}} \sum_{i=1}^{k+1}(-1)^{i}\binom{k+1}{i}(aa_1+bb_1)^{k+1-i}a^ib^i\chi_{a_1,b_1}\end{multlined}\\
    \end{align*}
    \begin{align*}
     &=S_{k+1}-\sum_{i=1}^{k+1}(-1)^{i}\binom{k+1}{i}a^ib^i\sum_{\substack{0\leq a_1 \leq b-1 \\ 0\leq b_1 \leq a-1}} (aa_1+bb_1)^{k+1-i}\chi_{a_1,b_1}.\\
     \intertext{Now using our assumption substituting the value for $$\sum_{\substack{0\leq a_1 \leq b-1 \\ 0\leq b_1 \leq a-1}} (aa_1+bb_1)^{k+1-i}\chi_{a_1,b_1},$$ we obtain}
     &=S_{k+1}-\sum_{i=1}^{k+1}(-1)^{i}\binom{k+1}{i}a^ib^i\sum_{j=0}^{k+1-i}\binom{k+1-i}{j}a^{j}b^{j}S_{k+1-i-j}\\
    &=\begin{multlined}[t]
       S_{k+1}+ab\binom{k+1}{1} \sum_{j=0}^{k}\binom{k}{j}a^{j}b^{j}S_{k-j} -a^2b^2\binom{k+1}{2} \sum_{j=0}^{k-1}\binom{k-1}{j}a^{j}b^{j}S_{k-1-j} +\\[2ex]\cdots+(-1)^ka^kb^k\binom{k+1}{k} \sum_{j=0}^{1}\binom{k}{j}a^{j}b^{j}S_{1-j}+(-1)^{k+1}a^{k+1}b^{k+1}\binom{k+1}{k+1} \binom{0}{0}S_{0}
    \end{multlined}
    \end{align*}
    \begin{align*}
    &=\begin{multlined}[t]
        S_{k+1}+ab \binom{k+1}{1}\binom{k}{0} S_k+a^2b^2\left( \binom{k+1}{1}\binom{k}{1}-\binom{k+1}{2}\binom{k-1}{0} \right)S_{k-1}+\\[2ex]\cdots+a^{k+1}b^{k+1}\left( \binom{k+1}{1}\binom{k}{k}-\binom{k+1}{2}\binom{k-1}{k-1}+\cdots+ (-1)^{k+1}\binom{k+1}{k+1}\binom{0}{0} \right)S_{0}
    \end{multlined}\\
    &=S_{k+1}+\sum_{i=1}^{k+1}a^ib^i\sum_{j=1}^{i}(-1)^{j+1}\binom{k+1}{j}\binom{k+1-j}{i-j}S_{k+1-i}.
\intertext{Now using Lemma \ref{ml1} substituting the value of $$\sum_{j=1}^{i}(-1)^{j+1}\binom{k+1}{j}\binom{k+1-j}{i-j}=\binom{k+1}{i},$$ we obtain}
&=S_{k+1}+\sum_{i=1}^{k+1}a^ib^i\binom{k+1}{i}S_{k+1-i}\\
&=\sum_{i=0}^{k+1}a^ib^i\binom{k+1}{i}S_{k+1-i}.
    \end{align*}
 
    \section{Proof of Theorem \ref{mt}}
\label{sec3}
Using Theorem \ref{main}, we have
  \begin{align*}
\sum_{n=0}^{ab-1} n^m&= \sum_{\substack{0\leq a_1 \leq b-1 \\ 0\leq b_1 \leq a-1}} (aa_1+bb_1-ab\chi_{a_1,b_1})^m\\
&=  \sum_{\substack{0\leq a_1 \leq b-1 \\ 0\leq b_1 \leq a-1}}\sum_{i=0}^{m}(-1)^i\binom{m}{i}(aa_1+bb_1)^{m-i}a^ib^i \chi_{a_1,b_1}^i.\\
\intertext{We define $\chi_{a_1,b_1}^0:=1$. Furthermore, note that for $i>0$, $\chi_{a_1,b_1}^i= \chi_{a_1,b_1}$. Using this, we obtain}
&= \sum_{\substack{0\leq a_1 \leq b-1 \\ 0\leq b_1 \leq a-1}}(aa_1+bb_1)^m+ \sum_{\substack{0\leq a_1 \leq b-1 \\ 0\leq b_1 \leq a-1}}\sum_{i=1}^{m}(-1)^i\binom{m}{i}(aa_1+bb_1)^{m-i}a^ib^i \chi_{a_1,b_1}\\
&= \sum_{\substack{0\leq a_1 \leq b-1 \\ 0\leq b_1 \leq a-1}}(aa_1+bb_1)^m+ \sum_{i=1}^{m}(-1)^i\binom{m}{i}a^ib^i\sum_{\substack{0\leq a_1 \leq b-1 \\ 0\leq b_1 \leq a-1}}(aa_1+bb_1)^{m-i} \chi_{a_1,b_1}.
\intertext{Now using Proposition \ref{ml2}}
&= \sum_{\substack{0\leq a_1 \leq b-1 \\ 0\leq b_1 \leq a-1}}(aa_1+bb_1)^m+ \sum_{i=1}^{m}(-1)^i\binom{m}{i}a^ib^i\sum_{j=0}^{m-i}\binom{m-i}{j}a^jb^jS_{m-i-j}\\
&=\begin{multlined}[t] \sum_{\substack{0\leq a_1 \leq b-1 \\ 0\leq b_1 \leq a-1}}(aa_1+bb_1)^m-\binom{m}{1}ab\sum_{j=0}^{m-1}\binom{m-1}{j}a^jb^jS_{m-1-j} \\[2ex]+ \sum_{i=2}^{m}(-1)^i\binom{m}{i}a^ib^i\sum_{j=0}^{m-i}\binom{m-i}{j}a^jb^jS_{m-i-j}
\end{multlined}\\
&=\begin{multlined}[t]
    \sum_{\substack{0\leq a_1 \leq b-1 \\ 0\leq b_1 \leq a-1}}(aa_1+bb_1)^m-mab\binom{m-1}{0}S_{m-1}-mab\sum_{j=1}^{m-1}\binom{m-1}{j}a^jb^jS_{m-1-j} \\[2ex]+ \sum_{i=2}^{m}(-1)^i\binom{m}{i}a^ib^i\sum_{j=0}^{m-i}\binom{m-i}{j}a^jb^jS_{m-i-j}.
\end{multlined} 
\intertext{That is}
\sum_{n=0}^{ab-1}n^m &=\begin{multlined}[t]
    \sum_{\substack{0\leq a_1 \leq b-1 \\ 0\leq b_1 \leq a-1}}(aa_1+bb_1)^m-mabS_{m-1}-mab\sum_{j=1}^{m-1}\binom{m-1}{j}a^jb^jS_{m-1-j} \\[2ex]+ \sum_{i=2}^{m}(-1)^i\binom{m}{i}a^ib^i\sum_{j=0}^{m-i}\binom{m-i}{j}a^jb^jS_{m-i-j}.
\end{multlined} 
\intertext{That is}
mabS_{m-1} &=\begin{multlined}[t]
    \sum_{\substack{0\leq a_1 \leq b-1 \\ 0\leq b_1 \leq a-1}}(aa_1+bb_1)^m-mab\sum_{j=1}^{m-1}\binom{m-1}{j}a^jb^jS_{m-1-j} \\[2ex]+ \sum_{i=2}^{m}(-1)^i\binom{m}{i}a^ib^i\sum_{j=0}^{m-i}\binom{m-i}{j}a^jb^jS_{m-i-j}-\sum_{n=0}^{ab-1}n^m.
    \end{multlined} 
\intertext{Finally, applying Theorem \ref{bernoulli}, we substitute the corresponding value for $$\sum_{n=0}^{ab-1}n^m,$$ we obtain}
 mabS_{m-1}&= \sum_{\substack{0\leq a_1 \leq b-1 \\ 0\leq b_1 \leq a-1}} (aa_1+bb_1)^m-mab\sum_{j=1}^{m-1}\binom{m-1}{j}a^{j}b^{j}S_{m-1-j} \\&\quad +\sum_{i=2}^{m}(-1)^{i}\binom{m}{i}a^{i}b^{i}\sum_{j=0}^{m-i}\binom{m-i}{j}a^{j}b^{j}S_{m-i-j}\\& \quad\quad-\frac{1}{m+1}\sum_{j=0}^{m}\binom{m+1}{j}B_{j}(ab-1)^{m+1-j}. 
    \end{align*}

\begin{proof}[Proof of Corollary \ref{DNM2}]
In Theorem \ref{mt} putting $m=1$ and simplifying
\begin{align*}
        abS_{0}&=\begin{multlined}[t]
            \sum_{\substack{0\leq a_1 \leq b-1 \\ 0\leq b_1 \leq a-1}} (aa_1+bb_1)-ab\sum_{j=1}^{0}\binom{0}{j}a^{j}b^{j}S_{0} \\[2ex]+\sum_{i=2}^{1}(-1)^{i}\binom{1}{i}a^{i}b^{i}\sum_{j=0}^{1-i}\binom{1-i}{j}a^{j}b^{j}S_{1-i-j}-\frac{1}{2}\sum_{j=0}^{1}\binom{2}{j}B_{j}(ab-1)^{2-j}. 
         \end{multlined}
         \intertext{Now using $\binom{n}{k}=0$ when $k\geq n$ we have}
         &=\sum_{\substack{0\leq a_1 \leq b-1 \\ 0\leq b_1 \leq a-1}} (aa_1+bb_1)-\frac{1}{2}B_0(ab-1)^2-B_1(ab-1).\\
         \intertext{Using Equation \eqref{IBN} substituting the values $B_0=1$ and $B_1=\frac{1}{2}$, we obtain}
        &= a\sum_{\substack{0\leq a_1 \leq b-1 \\ 0\leq b_1 \leq a-1}} a_1+b\sum_{\substack{0\leq a_1 \leq b-1 \\ 0\leq b_1 \leq a-1}} b_1-\frac{1}{2}ab(ab-1)\\ 
        &=a^2\sum_{0\leq a_1 \leq b-1}a_1+b^2\sum_{0\leq b_1 \leq a-1}b_1-\frac{1}{2}ab(ab-1)\\
         &= \frac{1}{2}a^2b(b-1)+\frac{1}{2}b^2a(a-1)-\frac{1}{2}ab(ab-1)\\
         &=\frac{1}{2}ab(ab-a-b+1).
        \end{align*}
        Therefore $$ S_0=\frac{1}{2}(a-1)(b-1).$$
\end{proof}

\begin{proof}[Proof of Corollary \ref{DNM3}]
In Theorem \ref{mt} putting $m=2$ and simplifying
\begin{align*}
        2abS_{1}&=\begin{multlined}[t]
            \sum_{\substack{0\leq a_1 \leq b-1 \\ 0\leq b_1 \leq a-1}} (aa_1+bb_1)^2-2ab\sum_{j=1}^{1}\binom{1}{j}a^{j}b^{j}S_{1-j} \\[2ex]+\sum_{i=2}^{2}(-1)^{i}\binom{2}{i}a^{i}b^{i}\sum_{j=0}^{2-i}\binom{2-i}{j}a^{j}b^{j}S_{2-i-j}-\frac{1}{3}\sum_{j=0}^{2}\binom{3}{j}B_{j}(ab-1)^{3-j}  
         \end{multlined}\\
         &=\sum_{\substack{0\leq a_1 \leq b-1 \\ 0\leq b_1 \leq a-1}} (aa_1+bb_1)^2-2a^2b^2S_{0}+a^2b^2S_{0}-\frac{1}{3}B_0(ab-1)^3-B_1(ab-1)^2\\&\quad\quad-B_2(ab-1).\\
         \intertext{Using Equation \eqref{IBN} substituting the values $B_0, B_1$ and $B_2$, we obtain}
         &=a^2\sum_{\substack{0\leq a_1 \leq b-1 \\ 0\leq b_1 \leq a-1}} a_{1}^2+2ab\sum_{\substack{0\leq a_1 \leq b-1 \\ 0\leq b_1 \leq a-1}} a_{1}b_1+b^2\sum_{\substack{0\leq a_1 \leq b-1 \\ 0\leq b_1 \leq a-1}} b_{1}^2-a^2b^2S_{0}-\frac{1}{3}(ab-1)^3\\&\quad \quad \quad -\frac{1}{2}(ab-1)^2 -\frac{1}{6}(ab-1)\\
         &=a^3\sum_{0\leq a_1 \leq b-1}a_{1}^2+a^2b(a-1)\sum_{0\leq a_1 \leq b-1}a_{1}+b^3\sum_{0\leq b_1 \leq a-1}b_{1}^2-a^2b^2S_{0}\\&\quad\quad\quad-\frac{1}{6}ab(ab-1)(2ab-1)\\
             &=\frac{1}{6}a^3b(b-1)(2b-1)+\frac{1}{2}a^2b^2(a-1)(b-1)+\frac{1}{6}b^3a(a-1)(2a-1)-a^2b^2S_{0}\\ &\quad \quad \quad -\frac{1}{6}ab(ab-1)(2ab-1)\\
              &=\frac{1}{6}a^3b(b-1)(2b-1)+a^2b^2S_0+\frac{1}{6}b^3a(a-1)(2a-1)-a^2b^2S_{0}\\&\quad\quad\quad-\frac{1}{6}ab(ab-1)(2ab-1)\\
         &=\frac{1}{6}a^3b(b-1)(2b-1)+\frac{1}{6}b^3a(a-1)(2a-1)-\frac{1}{6}ab(ab-1)(2ab-1).
          \intertext{That is,}
S_1&=\frac{1}{12}a^2(b-1)(2b-1)+\frac{1}{12}b^2(a-1)(2a-1)-\frac{1}{12}(ab-1)(2ab-1)\\
&=\frac{1}{12}(a-1)(b-1)(2ab-a-b-1).
             \end{align*}
        \end{proof}

\section{Proof of Theorem \ref{DNM1}}
\label{sec2}
 \begin{proof}
   Let $m$ be a number expressed as $ax+by$ for nonnegative integers $x$ and $y$. Without loss of generality, we can assume that $0\leq y <a$. Substituting $b=aq_0+r_0$, we have,
\begin{equation*}
    m=ax+(aq_0+r_0)y.
\end{equation*}
That is, 
\begin{equation}
\label{my}
    m=a(x+q_0y)+r_0y.
\end{equation}
Choose the smallest $0\leq k_m<r_0$ such that $0\leq r_0y-k_ma<a$. Then, Equation \eqref{my} can be written as 
\begin{equation*}
m=a(x+q_0y+k_m)+(r_0y-k_m a).
\end{equation*}

Note that by the division algorithm, there exist unique nonnegative integers $q$ and $r$ such that $m=aq+r$, where $0\leq r<a$. It follows that,
\begin{equation*}
    q=x+q_{0}y+k_{m} \hspace{0.4cm}\text{and} \hspace{0.4cm} r=r_{0}y-k_{m}a.
\end{equation*}
Solving for $x$ gives us
\begin{equation*}
    x=q-\frac{q_{0}r+k_{m}(aq_{0}+r_{0})}{r_{0}}.
\end{equation*}
Since $x\geq 0$, it follows that 
\begin{equation*}
     q\geq\frac{q_{0}r+k_{m}(aq_{0}+r_{0})}{r_{0}}.
\end{equation*}
Substituting $r=m-aq$, we obtain
\begin{equation*}
\begin{split}
 q&\geq \frac{q_{0}(m-aq)+k_{m}(aq_{0}+r_{0})}{r_{0}}\\
 &= \frac{q_{0}m-q_{0}aq+k_{m}(aq_{0}+r_{0})}{r_{0}}.\\
 \end{split}
\end{equation*}
That is,
\begin{equation*}
    q(aq_0+r_0)\geq q_0m+k_m(aq_0+r_0).
    \end{equation*}
    Solving further, we get
    \begin{equation*}
    \begin{split}
     q & \geq  q_{0}\frac{m}{aq_{0}+r_{0}}+k_{m}\\
&= q_{0}\frac{m}{b}+k_{m}.
   \end{split} 
\end{equation*}
    Since $q=\lfloor \frac{m}{a} \rfloor$, we obtain the following inequality:
 \begin{equation*}
   \left\lfloor \frac{m}{a} \right\rfloor \geq q_{0}\frac{m}{b}+k_{m}.
    \end{equation*}

Finally, note that if $r\equiv r_m$ (mod $r_0$) for some $0\leq r_m <r_0$, and since $r=r_0y-k_m a$, it follows that $-k_ma\equiv r_m$ (mod $r_0$). Therefore, $k_m\equiv -a_{r_0}^{-1}r_m$ (mod $r_0$).
Thus, the result holds.

Conversely, let $\lfloor \frac{m}{a}\rfloor \geq q_{0}\frac{m}{b}+k_{m}$, where $k_{m}$ is the same as defined earlier. Note that
\begin{equation*}
    \begin{split}
   \left \lfloor \frac{m}{a} \right\rfloor &\geq q_{0}\frac{m}{b}+k_{m}, \\
     q &\geq q_{0}\frac{m}{aq_{0}+r_{0}}+k_{m}, \\
     q(aq_{0}+r_{0}) &\geq q_{0}m+k_{m}(aq_{0}+r_{0}), \\
     q(aq_{0}+r_{0}) &\geq q_{0}(aq+r)+k_{m}(aq_{0}+r_{0}), \\
     qr_{0} &\geq q_{0}r+k_{m}(aq_{0}+r_{0}), \\
     q &\geq \frac{q_{0}r+k_{m}(aq_{0}+r_{0})}{r_0}.
      \end{split}
\end{equation*}

Further, note that
\begin{equation*}
    \frac{q_{0}r+k_{m}(aq_0+r_0)}{r_0}= \frac{q_{0}(r+k_{m}a)+k_{m}r_0}{r_0}.
\end{equation*}
Since $-ak_{m}\equiv r_m$(mod$r_0$) and $r\equiv r_m$ (mod$r_0$), it follows that $k_{m}a+r$ is divisible by $r_0$. Hence,
$$ \frac{r+k_{m}a}{r_0}, \frac{q_{0}(r+k_{m}a)+k_{m}r_0}{r_0} \in \mathbb{N}.$$
Now rewrite $m$ as follows
\begin{equation*}
\begin{split}
    m&=a\left( q-\frac{q_{0}r+k_{m}(aq_0+r_0)}{r_0}\right)+(aq_{0}+r_{0})\left(\frac{r+k_{m}a}{r_0}\right)\\
    &=a\left( q-\frac{q_{0}r+k_{m}(aq_0+r_0)}{r_0}\right)+b\left(\frac{r+k_{m}a}{r_0}\right)\\
    &=a\left( q-\frac{q_{0}(r+k_{m}a)+k_{m}r_0}{r_0}\right)+b\left(\frac{r+k_{m}a}{r_0}\right).
    \end{split}
\end{equation*}
Now take
\begin{equation*}
    x=\left( q-\frac{q_{0}(r+k_{m}a)+k_{m}r_0}{r_0}\right) \text{and}\ y=\left(\frac{r+k_{m}a}{r_0}\right),
\end{equation*}
we have
$$m=ax+by, \text{ where $x$ and $y$ are nonnegative integers}.$$
\end{proof}   

\section{Concluding Remarks}
In this article, we obtain a formula for calculating the power sum of nonrepresentable numbers in terms of $a$ and $b$, where $\gcd(a,b)=1$. The key result we use is the formula for the number of nonnegative integer solutions of the Diophantine equation $ax+by=n$, where $\gcd(a,b)=1$, given by Binner \cite{D19}. A natural question to ask is whether this result can be generalized to nonrepresentable numbers in terms of three variables $a$, $b$, and $c$, where $\gcd(a,b,c)=1$. In light of our proof technique, one may hope to use the formula for the number of nonnegative integer solutions of the Diophantine equation in three variables $ax+by+cz=n$, where $\gcd(a,b,c)=1$, also given by Binner \cite{D19}.

\section{Acknowledgment}
The authors appreciate the financial support and research facilities the Shiv Nadar Institute of Eminence provided. They also sincerely thank Dr.\ Damanvir Singh Binner for his invaluable guidance and insightful suggestions during the early stages of this research. His contributions were crucial in shaping the direction of this work, and his encouragement played a key role in facilitating the exploration of the research problems addressed in this study. The authors would like to thank the anonymous referee for valuable suggestions that helped improve the presentation and enrich the literature review of this paper.

\bibliographystyle{jis}
\bibliography{references}






\end{document}